\documentclass[12pt]{amsart}
\usepackage{bm}
\usepackage[dvipsnames,svgnames,x11names]{xcolor}
\usepackage[hyphens]{url}
\usepackage[backref=page,colorlinks=true,linkcolor=Maroon,citecolor=blue,urlcolor=blue,hypertexnames=false,linktocpage]{hyperref}
\usepackage{bookmark}
\usepackage{amsmath,thmtools,mathtools}
\mathtoolsset{showonlyrefs=true}
\usepackage{blindtext}
\usepackage{tikz}
\usepackage{fancyhdr}
\usepackage{esint}
\usepackage{enumerate}
\usepackage{pictexwd,dcpic}
\usepackage{graphicx}
\usepackage{caption}
\setlength{\overfullrule}{5pt}
\allowdisplaybreaks


\newcounter{mgncount}

\newtheorem*{claim}{Claim}

\newtheorem*{qu-}{Question}
\declaretheorem[name=Theorem,numberwithin=section]{thm}

\declaretheorem[name=Lemma,sibling=thm]{lemma}

\numberwithin{equation}{section}


\newcommand{\bbN}{\mathbb{N}}

\newcommand{\bbR}{\mathbb{R}}

\newcommand{\bbS}{\mathbb{S}}

\newcommand{\al}{\alpha}


\newcommand{\cC}{\mathcal{C}}

\newcommand{\cK}{\mathcal{K}}

\newcommand{\cM}{\mathcal{M}}

\newcommand{\cT}{\mathcal{T}}



\newcommand{\eq}[1]{\begin{equation}\begin{alignedat}{2} #1 \end{alignedat}\end{equation}}




\newcommand{\q}{\quad}

\oddsidemargin 0.6in \evensidemargin 0.6in 
\textwidth 5.5in

\begin{document}

\title[Capillary $L_p$ Minkowski Flows]{Capillary $L_p$ Minkowski Flows}
	\author[J. Hu, Y. Hu, M. N. Ivaki]{Jinrong Hu, Yingxiang Hu, Mohammad N. Ivaki}

\begin{abstract}
We study the long-time existence and asymptotic behavior of a class of anisotropic capillary Gauss curvature flows. As an application, we provide a flow approach to the existence of smooth solutions to the capillary even $L_p$ Minkowski problem in the Euclidean half-space for all $p \in (-n-1, \infty)$ and capillary $L_p$ Minkowski problem for $p > n+1$.
\end{abstract}

\keywords{Capillary $L_p$ Minkowski problem, capillary Gauss curvature flows}
\subjclass[2020]{53C21, 35J66, 35K55, 52A20}
\thanks{}
\maketitle

\section{Introduction}
\label{Sec1}

The Minkowski problem, a fundamental problem in the Brunn-Minkowski theory of convex geometry, seeks to reconstruct a convex body from its surface area measure. It asks whether a given positive Borel measure on the unit sphere $\mathbb{S}^n$ arises as the surface area measure of a convex body in $\mathbb{R}^{n+1}$, and if so, whether this convex body is uniquely determined (up to translation). This problem was solved by Minkowski for polytopes (see \cite{M897, M903}) and subsequently by Aleksandrov \cite{A38, A39}, Nirenberg \cite{N53}, Pogorelov \cite{P78}, Cheng-Yau \cite{CY76}, and others.

The $L_p$ Minkowski problem, which concerns the prescription of the $L_p$ surface area measure, was first introduced by Lutwak \cite{L93} and solved by Lutwak and Oliker \cite{LO95} for the even regular case when $p > 1$. Building on Lutwak's foundational work, the $L_p$ Minkowski problem has seen significant progress; see, e.g., \cite{ACW01, Sta02, LYZ04, CW06, BLYZ13, HL13, LW13, Zh15a, Zhu15b, B19, GLW22} and the references therein.

With the development of the classical and $L_p$ Minkowski problems, analogous questions have naturally arisen in the geometric study of capillary hypersurfaces which are discussed in detail below.

Let $\{E_i\}_{i=1}^{n+1}$ denote the standard orthonormal basis of $\mathbb{R}^{n+1}$, and define the Euclidean half-space as
\eq{
\mathbb{R}^{n+1}_{+} = \{ y \in \mathbb{R}^{n+1} : y \cdot E_{n+1} > 0 \}.
}
Let $\Sigma \subset \overline{\mathbb{R}^{n+1}_{+}}$ be a smooth, properly embedded, compact hypersurface with boundary $\partial \Sigma \subset \partial \mathbb{R}^{n+1}_{+}$. We say that $\Sigma$ is a capillary hypersurface with constant contact angle $\theta \in (0, \pi)$ if it satisfies
\eq{
\nu \cdot e = \cos(\pi - \theta) \quad \text{along $\partial \Sigma$},
}
where $e := -E_{n+1}$ and $\nu$ denotes the outer unit normal of $\Sigma$ in $\overline{\mathbb{R}^{n+1}_{+}}$. We call $\widehat{\Sigma}$ a \emph{capillary convex body} if $\widehat{\Sigma}$ is a bounded, closed region in $\overline{\mathbb{R}^{n+1}_{+}}$ enclosed by a strictly convex capillary hypersurface $\Sigma$ and the support hyperplane $\partial \mathbb{R}^{n+1}_{+}$. 

Let $\cC_\theta = \cC_{\theta,1}$ be the capillary spherical cap of radius $1$ intersecting the support hyperplane $\partial \mathbb{R}^{n+1}_{+}$ at a constant angle $\theta \in (0,\pi)$, which is defined as \eqref{s2:def-C_theta}.

Let $\tilde{\nu}: \Sigma \to \mathcal{C}_\theta$ denote the capillary Gauss map of $\Sigma$, given by 
\eq{ 
\tilde{\nu} := \nu + \cos\theta\, e,
}
which is a diffeomorphism. The capillary area measure of $\Sigma$ is defined as
\eq{
S^{c}(\eta) = \int_{\tilde{\nu}^{-1}(\eta)} \big(1 + \cos \theta\, (\nu \cdot e)\big) \, d \mathcal{H}^{n}
}
for all Borel sets $\eta \subset \mathcal{C}_\theta$, where $\tilde{\nu}^{-1}: \cC_\theta \to \Sigma$ is the inverse capillary Gauss map, and $\mathcal{H}^{n}$ denotes the $n$-dimensional Hausdorff measure. The capillary area measure is a capillary counterpart (or Robin boundary analogue) of the surface area measure. By this diffeomorphism, $S^{c}(\eta)$ has the equivalent form
\eq{
S^{c}(\eta) = \int_{\eta} \frac{\sin^{2}\theta + \cos \theta\, (\xi \cdot e)}{\cK(\tilde{\nu}^{-1}(\xi))} \, d\xi,
}
where $\cK$ is the Gauss curvature of $\Sigma$ and $d\xi$ is the area element of $\mathcal{C}_{\theta}$. Put
\eq{
\ell(\xi) = \sin^{2}\theta + \cos \theta\, (\xi \cdot e).
}
Then $\ell$ is the capillary support function of $\cC_\theta$ (see \eqref{s2:capillary-spt-fucntion-C} below).

Let $f: \mathcal{C}_{\theta} \to (0,\infty)$ be a smooth function. The \emph{regular capillary Minkowski problem} asks for the existence of a solution $h: \mathcal{C}_\theta \to \mathbb{R}$ (which serves as the capillary support function of a strictly convex capillary hypersurface; see \autoref{Sec2} for more details) to the following Monge-Amp\`{e}re type equation with a Robin boundary condition:
\eq{
\left\{
\begin{aligned}
\det(\nabla^{2}h + h I) &= f, & \quad \text{in } \mathcal{C}_\theta, \\
\nabla_{\mu} h &= \cot \theta\, h, & \quad \text{on } \partial\mathcal{C}_\theta.
\end{aligned}
\right.
}
Here $\nabla h$ and $\nabla^{2} h$ denote the gradient and the Hessian of $h$ with respect to a local orthonormal frame on $\mathcal{C}_\theta$, and $\mu$ is the unit outward normal of $\partial\mathcal{C}_\theta \subset \mathcal{C}_\theta$.

The necessary and sufficient condition for solving the capillary Minkowski problem when $\theta \in (0,\frac{\pi}{2}]$ was established in \cite{MWW25} via the continuity method. The case $\theta > \frac{\pi}{2}$ remains open.

The capillary $L_p$-surface area measure $dS^{c}_{p}$ for a capillary convex body $\widehat{\Sigma}$ in $\overline{\mathbb{R}^{n+1}_{+}}$ is defined by
\eq{
dS^{c}_{p} = \frac{\ell\, h^{1-p}}{\cK} \, d\xi.
}

The \emph{regular capillary $L_p$ Minkowski problem} amounts to solving the following Monge-Amp\`{e}re type equation with a Robin boundary condition:
\eq{\label{Lp-MP-smooth}
\left\{
\begin{aligned}
\det(\nabla^{2}h + h I) &= fh^{p-1}, & \quad \text{in } \mathcal{C}_\theta, \\
\nabla_{\mu} h& = \cot \theta\, h, & \quad \text{on } \partial\mathcal{C}_\theta.
\end{aligned}
\right.
}

We say that a smooth function $f: \cC_\theta \to \mathbb{R}$ is \emph{even} if
\eq{
f(-\xi_1, \ldots, -\xi_n, \xi_{n+1}) = f(\xi_1, \ldots, \xi_n, \xi_{n+1}), \quad \forall\, \xi \in \mathcal{C}_\theta.
}
A strictly convex capillary hypersurface is called even if its capillary support function is even.

For the regular capillary $L_p$ Minkowski problem with $\theta \in (0, \frac{\pi}{2})$, Mei, Wang, and Weng solved the case $p = 1$ in \cite{MWW25}, the even case (i.e. when $f$ is even) for $1 < p < n+1$, and the case for $p \geq n+1$ in \cite{MWW25a}. Y. Hu and Ivaki \cite{HI25} solved the even case for $-n-1 < p < 1$ using an iterative scheme; see also \cite{I20}.

In comparison with the methods in \cite{HI25,MWW25,MWW25a}, we here study the capillary $L_p$ Minkowski problem for $p \in (-n-1,\infty)$  through anisotropic capillary Gauss curvature flows (see \eqref{GX}, \eqref{GX-p>n+1}). In particular, we solve the capillary even $L_p$ Minkowski problem for \emph{all} $p>-n-1$,  and the non-even case for $p>n+1$. It would be interesting to further solve the non-even case for $p=1$ and $p=n+1$ using geometric flows. 

Geometric flows have proven to be an efficient tool in the study of Minkowski-type problems. For example, the logarithmic Gauss curvature flow \cite{CW00} and its $L_p$ analogues, such as those in \cite{BIS19,LSW20,CL21,GLW22,BG25}, have been instrumental in analyzing both the classical and $L_p$ settings. The flows \eqref{GX}, \eqref{GX-p>n+1}, introduced shortly, may be viewed as capillary analogues of the $L_p$ Minkowski flows subject to Robin boundary conditions. Recently, an isotropic capillary Gauss curvature flow was considered by Mei, Wang, and Weng \cite{MWW25b}, which is a capillary counterpart of Firey's variational worn stone; see Firey \cite{F74}, Chou \cite{CH85}, Andrews \cite{And99}, and Guan and Ni \cite{GN17}. While the classification of solitons has been settled for the classical Gauss curvature flow, it remains open in the capillary case; see \cite{Gag84, And99, BCD17, Sar22, IM23, HI25a}. See also for additional work in the capillary framework \cite{HWYZ24, WWX24, KLS25, MWWX25}.

Let $Y_0: \mathcal{M} \to \overline{\mathbb{R}^{n+1}_{+}}$ be an embedding of an even, smooth, strictly convex capillary hypersurface $\cM_0 = Y_0(\mathcal{M})$. Consider a family of even, smooth, strictly convex capillary hypersurfaces $\cM_t := Y(\mathcal{M},t)$ satisfying
\eq{
\label{eq:un-normalized_flow}
\left\{
\begin{aligned}
\partial_t Y &= -\frac{f\circ \tilde{\nu}\, (Y \cdot \nu)^p}{1 + \cos\theta \, (\nu \cdot e)} \mathcal{K} \tilde{\nu}, & \quad \text{in $\mathcal{M} \times [0,\cT)$}, \\
\tilde{\nu} \cdot e &= 0, & \quad \text{on $ \partial \mathcal{M} \times [0,\cT)$}, \\
Y(\cdot,0) &= Y_0(\cdot), & \quad \text{in $\mathcal{M}$}.
\end{aligned}
\right.
}
The rescaled solution
\begin{equation}
X(\xi, \tau(t)) := \left( \frac{V(\widehat{\mathcal{M}}_0)}{V(\widehat{\mathcal{M}}_t)} \right)^{\frac{1}{n+1}} Y(\xi, t)
\end{equation}
with the new time parameter defined via
\begin{equation}
\frac{d\tau}{dt} = -\frac{1}{n+1} \frac{d}{dt} \log V(\widehat{\mathcal{M}}_t),\q \tau(0)=0,
\end{equation}
satisfies the following equations
\begin{equation}
\label{GX}
\scalebox{0.95}{
$\left\{
\begin{aligned}
\partial_{\tau}X
&= - \frac{(n+1) V(\widehat{\cM_0})}{\int_{\mathcal{C}_\theta} f h^{p}\, d\xi} \,
\frac{f \circ \tilde{\nu} \,(X \cdot \nu)^{p}}{1 + \cos \theta\, (\nu \cdot e)} \cK \tilde{\nu} + X, & \text{in } \mathcal{M} \times [0,\infty), \\
\tilde{\nu} \cdot e &= 0, & \text{on } \partial \mathcal{M} \times [0,\infty), \\
X(\cdot,0) &= X_0(\cdot), & \text{in } \mathcal{M}.
\end{aligned}
\right.$
}
\end{equation}
Here $\Sigma_{\tau} := X(\mathcal{M},\tau)$. Note that $\widehat{\Sigma_0}=\widehat{\cM_0}$ and $V(\widehat{\Sigma_{\tau}})=V(\widehat{\Sigma_0})$ for all $\tau>0$. We differentiate the two flows using distinct time parameters: $t$ is used for the shrinking flow, while $\tau$ is used for the volume-normalized flow. The asymptotic behavior of the rescaled solution is captured in the following theorem.

\begin{thm}\label{MTO}
Let $p > -n-1$ and $\theta \in (0,\frac{\pi}{2})$. Let $\Sigma_0$ be an even, smooth, and strictly convex capillary hypersurface in $\overline{\mathbb{R}^{n+1}_{+}}$. Let $f$ be an even, smooth, positive function on $\mathcal{C}_\theta$. Then there exists an even, smooth, and strictly convex solution $\Sigma_{\tau}$ to the flow \eqref{GX} for all $\tau > 0$. Moreover, a subsequence of $\{\Sigma_{\tau}\}$ converges in $C^{\infty}$ to an even, smooth, and strictly convex capillary hypersurface $\Sigma$ satisfying
\eq{\label{s1:static-eq}
\left\{
\begin{aligned}
\det(\nabla^{2}h + h I) &= \frac{(n+1) V(\widehat{\Sigma_{0}}) }{\int_{\cC_\theta} f h^{p} \, d\xi}f h^{p-1} , & \quad \text{in } \mathcal{C}_\theta, \\
\nabla_{\mu} h &= \cot \theta\, h, & \quad \text{on }\partial\mathcal{C}_\theta.
\end{aligned}
\right.
}
\end{thm}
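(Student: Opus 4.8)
The plan is to pass to the scalar formulation of \eqref{GX} and then run, uniformly in time, the standard a priori estimate machinery for parabolic Monge--Amp\`{e}re equations with an oblique boundary condition. Parametrizing $\Sigma_\tau$ by its capillary support function $h=h(\cdot,\tau)\colon\cC_\theta\to(0,\infty)$ and writing $\sigma(\tau):=\frac{(n+1)V(\widehat{\cM_0})}{\int_{\cC_\theta} f h^p\,d\xi}$, the flow \eqref{GX} is equivalent to the scalar problem $\partial_\tau h = h-\sigma(\tau)\,f h^{p}\,\det(\nabla^{2}h+hI)^{-1}$ in $\cC_\theta$ together with $\nabla_\mu h=\cot\theta\,h$ on $\partial\cC_\theta$, whose stationary solutions are precisely the solutions of \eqref{s1:static-eq}. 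The right-hand side is uniformly parabolic as long as $\nabla^{2} h+hI$ stays bounded above and below in the positive-definite order, and the boundary operator $\nabla_\mu h-\cot\theta\,h$ is a regular (non-tangential) oblique operator with $\cot\theta>0$ because $\theta<\tfrac\pi2$. Short-time existence, uniqueness, and smoothness of an even, strictly convex solution then follow from standard theory for nonlinear oblique derivative parabolic problems, and evenness persists because the equation is invariant under $\xi\mapsto(-\xi_1,\dots,-\xi_n,\xi_{n+1})$.

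The core is a collection of a priori estimates uniform in $\tau$. For the $C^{0}$ bound I would combine the preserved capillary volume $V(\widehat{\Sigma_\tau})=V(\widehat{\Sigma_0})$ with the monotonicity along the flow of an associated entropy-type functional $\cJ$ --- built from $\int_{\cC_\theta} f h^{p}\,d\xi$, or $\int_{\cC_\theta} f\log h\,d\xi$ when $p=0$, possibly combined with the capillary volume --- whose critical points subject to the volume constraint are exactly the solutions of \eqref{s1:static-eq}; this yields $0<c_0\le h\le C_0$. The evenness of $f$ and of $\Sigma_\tau$ is essential here to prevent the capillary body from collapsing onto a lower-dimensional set in the problematic range of $p$ (in particular when $p\le1$), and this is precisely where the hypothesis $p>-n-1$ enters; the argument should follow the $C^{0}$ estimates for the even $L_p$ Minkowski problem, adapted to the half-space (cf.\ \cite{HI25}). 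The $C^{1}$ bound is then automatic, since for a convex capillary body the gradient of the support function is controlled by its maximum. The crucial $C^{2}$ bound --- that $\nabla^{2} h+hI$ stays uniformly positive definite and uniformly bounded above, i.e.\ that $\Sigma_\tau$ stays uniformly convex with uniformly bounded principal radii of curvature --- is obtained via the parabolic maximum principle applied to the largest and smallest eigenvalues of the radii-of-curvature matrix (equivalently to $\log\det(\nabla^{2} h+hI)$ together with an auxiliary pinching quantity), using the $C^{0}$--$C^{1}$ bounds and the positive bounds on $f$. The main difficulty is at the boundary $\partial\cC_\theta$: there one must exploit the precise Robin condition $\nabla_\mu h=\cot\theta\,h$ and $\theta<\tfrac\pi2$ to produce the boundary relations for the second derivatives needed to handle the case of an extremum on $\partial\cC_\theta$, as in the capillary computations of \cite{MWW25a,MWW25b}. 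Once the $C^{2}$ bound holds, the equation is uniformly parabolic with bounded coefficients, so Krylov--Safonov and Schauder estimates for oblique derivative problems, bootstrapped in the usual way, give uniform $C^{k}$ bounds for every $k$; combined with short-time existence this yields long-time existence of \eqref{GX}.

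For the asymptotics, the uniform $C^{\infty}$ bounds and the Arzel\`{a}--Ascoli theorem give, for any sequence $\tau_j\to\infty$, a subsequence of $\{\Sigma_{\tau_j}\}$ converging in $C^{\infty}$ to an even, smooth, strictly convex capillary hypersurface $\Sigma$, with the capillary condition $\tilde\nu\cdot e=0$ (equivalently $\nabla_\mu h=\cot\theta\,h$) passing to the limit. To see that $\Sigma$ solves \eqref{s1:static-eq}, use that $\cJ$ is monotone along the flow and, by the $C^{0}$ bound, bounded, so $\tfrac{d}{d\tau}\cJ(h(\cdot,\tau))\to0$; since $\tfrac{d}{d\tau}\cJ$ equals, up to a positive factor, an integral over $\cC_\theta$ of a non-negative integrand that vanishes exactly when $h$ solves \eqref{s1:static-eq}, passing to the limit along the convergent subsequence forces $\partial_\tau h\equiv0$ on the limit, i.e.\ \eqref{s1:static-eq}. (Alternatively, one integrates the monotonicity identity over $[\tau_j,\tau_j+1]$ and passes to the limit using the uniform estimates.)

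The step I expect to be the main obstacle is the $C^{2}$ estimate up to the boundary: controlling the Hessian of $h$ near $\partial\cC_\theta$ under the oblique Robin condition is where the geometry of the capillary setting and the restriction $\theta<\tfrac\pi2$ are genuinely used. A secondary delicate point is the $C^{0}$ estimate as $p\to-n-1$, where the even-case arguments are tight and must be transferred carefully to capillary bodies in $\overline{\bbR^{n+1}_{+}}$.
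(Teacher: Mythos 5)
Your proposal follows essentially the same route as the paper: scalar formulation via the capillary support function, a monotone entropy functional together with the volume constraint for the $C^0$ bound (evenness used to rule out collapse in the range $p\le 1$), the $C^1$ bound from convexity, $C^2$ estimates via the maximum principle, parabolic bootstrapping for higher regularity, and subsequential $C^\infty$ convergence to a static solution driven by the vanishing of the dissipation term. The one point worth flagging is that where you defer to the capillary boundary computations of \cite{MWW25a,MWW25b} for the $C^2$ estimate near $\partial\cC_\theta$, the paper stresses that it avoids the double-normal reduction used in the elliptic approach: it derives the Gauss curvature upper bound, Gauss curvature lower bound, and $\sigma_1$ bound in that order, using test quantities $-h_\tau/(h-\varepsilon_0)$, $\log(f^{-1}\cK^{-1})-N\log h$, and $\sigma_1+\tfrac{A}{2}|\nabla h|^2$, chosen precisely so that the Robin condition $\nabla_\mu h=\cot\theta\,h$ alone forces each quantity to have strictly negative outward normal derivative at any boundary maximum (for $N$ large, respectively for $Q$ large), so the maximum is interior and the interior maximum principle suffices.
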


Compared with the elliptic approach of \cite{MWW25,MWW25a} (see also \cite{LTU86,MQ19}), where the global $C^{2}$ estimate is reduced to the boundary double normal estimate and then derived via the construction of a suitable test function, we obtain the $C^{2}$ estimate here in a much simpler and more direct way.

The evenness restriction in \autoref{MTO} is only used to establish the $C^0$ estimate. In particular, if $p > n+1$, this assumption can be removed by employing the following anisotropic Gauss curvature flow
\eq{\label{GX-p>n+1}
\left\{
\begin{aligned}
\partial_{\tau} X
&= -\frac{f\circ \tilde{\nu} \, (X \cdot \nu)^{p}}{1 + \cos \theta\, (\nu \cdot e)} 
 \cK\, \tilde{\nu} + X, & \quad \text{in $ \cM \times [0,\infty)$}, \\
\tilde{\nu} \cdot e &= 0, & \quad \text{on } \partial \cM \times [0,\infty), \\
X(\cdot,0) &= X_0(\cdot), & \quad \text{in $\cM$}.
\end{aligned}
\right.
}
Compared to the flow \eqref{GX}, we dropped the factor $\frac{(n+1) V(\widehat{\Sigma_{0}})}{\int_{\mathcal{C}_\theta} f h^{p}\, d\xi}$ from the speed of the flow; see also \cite{BIS21a,BIS21b}.

\begin{thm}\label{MTO-noneven}
Let $p > n+1$ and $\theta \in (0,\frac{\pi}{2})$. Let $\Sigma_0$ be a smooth, strictly convex capillary hypersurface in $\overline{\mathbb{R}^{n+1}_{+}}$ with positive capillary support function. Let $f$ be a smooth, positive function on $\mathcal{C}_\theta$. Then there exists a smooth, strictly convex solution $\Sigma_{\tau}$ to the flow \eqref{GX-p>n+1} for all $\tau > 0$, and a subsequence of $\{\Sigma_{\tau}\}$ converges in $C^{\infty}$ to a smooth and strictly convex capillary hypersurface $\Sigma$ satisfying \eqref{Lp-MP-smooth}.
\end{thm}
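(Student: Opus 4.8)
The plan is to treat \eqref{GX-p>n+1} as a parabolic Monge--Amp\`ere equation for the capillary support function $h(\cdot,\tau)$ on $\cC_\theta$ with a Robin boundary condition, and to run the standard continuity/a priori estimate machinery: short-time existence, uniform $C^0$, $C^1$, and $C^2$ bounds (two-sided), followed by Krylov--Safonov and Schauder theory to upgrade to $C^\infty$ and extend the flow to $\tau=\infty$; then extract a convergent subsequence whose limit solves the stationary equation \eqref{Lp-MP-smooth}. Since the flow \eqref{GX-p>n+1} differs from \eqref{GX} only by the absence of the normalizing constant $\frac{(n+1)V(\widehat{\Sigma_0})}{\int_{\cC_\theta}fh^p\,d\xi}$, most of the analytic steps carry over verbatim from the proof of \autoref{MTO}; the only genuinely new input is the $C^0$ estimate, which in \autoref{MTO} used evenness and must here be replaced by an argument exploiting $p>n+1$.

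First I would write the evolution equation for $h$: under \eqref{GX-p>n+1} one gets $\partial_\tau h = -\frac{f\circ\tilde\nu\, h^p}{1+\cos\theta\,(\nu\cdot e)}\cK + h$, where $\cK=\det(\nabla^2 h+hI)^{-1}$ in the capillary Gauss-map parametrization, so that the PDE reads $\partial_\tau h = f h^p \det(\nabla^2 h + hI)^{-1}\cdot(\text{positive factor})^{-1}\cdot(\dots) + h$ — schematically a parabolic equation of the form $\partial_\tau h = -\Phi(\nabla^2 h,\nabla h,h,\xi)+h$, uniformly parabolic once $C^2$ bounds are in place. The Robin condition $\tilde\nu\cdot e=0$ on $\partial\cM$ translates, as in the elliptic problem \eqref{Lp-MP-smooth}, to $\nabla_\mu h=\cot\theta\,h$ on $\partial\cC_\theta$; this is the boundary condition preserved by the flow, and short-time existence for such oblique-derivative parabolic problems is classical. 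For the $C^0$ estimate I would consider $\max_{\cC_\theta}h(\cdot,\tau)=:h_{\max}(\tau)$ and $\min_{\cC_\theta}h(\cdot,\tau)=:h_{\min}(\tau)$; at an interior spatial maximum $\nabla^2 h\le 0$ so $\det(\nabla^2 h+hI)\le h^n$, giving $\partial_\tau h_{\max}\ge -C h_{\max}^{p}\,h_{\max}^{-n}+\dots$ — and here the hypothesis $p>n+1$ makes the exponent $p-n>1$, which forces an upper barrier: for $h_{\max}$ large the negative term dominates, so $h_{\max}$ cannot blow up. Similarly at an interior minimum $\det(\nabla^2 h+hI)\ge h^n$ pushes $\partial_\tau h_{\min}\ge h_{\min}-Ch_{\min}^{p-n}$ which, since $p-n>1$, keeps $h_{\min}$ bounded below away from $0$ (a small-$h$ lower barrier); the contact-angle boundary condition must be checked not to destroy these extremum arguments — on $\partial\cC_\theta$ one uses $\nabla_\mu h=\cot\theta\,h$ and $\theta<\frac\pi2$ to control the sign of the normal derivative at a boundary extremum, exactly as in \cite{MWW25,MWW25a}.

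Once $0<c\le h\le C$ is established, the $C^1$ bound for $h$ is automatic from convexity (the gradient is controlled by the support function and the geometry of $\cC_\theta$), and the key $C^2$ estimate I would obtain in the ``simpler and more direct way'' advertised after \autoref{MTO}: bound the speed (equivalently $\partial_\tau h$) from above and below by applying the maximum principle to its evolution equation, bound the principal radii of curvature (eigenvalues of $\nabla^2 h+hI$) from above by a tensor maximum principle applied to $\nabla^2 h+hI$ together with the $C^0$/speed bounds, and then get the lower bound on curvature (i.e.\ the upper bound on $\cK$, equivalently strict convexity is preserved quantitatively) from $\det(\nabla^2h+hI)=f h^{p-1}/(\partial_\tau h - h)(\dots)$ combined with the already-established upper bound on the radii. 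The boundary $C^2$ estimate near $\partial\cC_\theta$ is where the Robin condition interacts with the Hessian; following \cite{MWW25a, LTU86, MQ19} one differentiates the oblique condition tangentially and normally and uses the structure to bound mixed and double-normal second derivatives — but the authors claim this is circumvented here, presumably because the tensor maximum principle with the specific reflection/extension induced by the capillary boundary condition handles the boundary directly. With uniform parabolicity and uniform $C^2$ in hand, Krylov--Safonov gives $C^{2,\alpha}$, Schauder bootstraps to $C^\infty$ with bounds uniform in $\tau$, so the flow exists for all $\tau\in[0,\infty)$.

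The final step is convergence to a stationary solution. For the non-normalized flow \eqref{GX-p>n+1} the natural monotone functional is $\cJ(\tau)=\int_{\cC_\theta}f h^{p}\,d\xi$ or a related entropy; I would compute $\frac{d}{d\tau}\cJ$ and show it is monotone with the derivative controlled below by a nonnegative quantity that vanishes only at solutions of \eqref{Lp-MP-smooth}, then integrate in $\tau$ over $[0,\infty)$ to conclude that this quantity tends to $0$ along a subsequence $\tau_j\to\infty$; the uniform $C^\infty$ bounds let us pass to a $C^\infty$ limit $h_\infty$, and the vanishing of the monotonicity defect identifies $h_\infty$ as a solution of the elliptic system \eqref{Lp-MP-smooth} (note that for $p>n+1$ no extra constant appears, in contrast to \eqref{s1:static-eq}, precisely because the normalizing factor was dropped). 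I expect the $C^0$ estimate to be the main obstacle in spirit — it is the one place the proof genuinely diverges from \autoref{MTO} and relies essentially on $p>n+1$ — though technically the most laborious part will be the boundary $C^2$ and the verification that all maximum-principle arguments survive the contact-angle condition; a secondary subtlety is ensuring the monotone functional's derivative genuinely controls the distance to the solution set (this is where one typically invokes a Jensen-type or Hölder inequality together with $p>n+1$, noting $p\ne 0$ and $p-1\ne -1$ so no degenerate cases intervene).
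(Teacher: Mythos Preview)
Your overall architecture matches the paper's: short-time existence, $C^0$--$C^2$ a~priori estimates, bootstrap, then subsequential convergence via a monotone functional. The $C^1$ and $C^2$ estimates do carry over essentially verbatim from the even case (the paper states this in \autoref{s4:lem-C2*non-even}), and the paper's $C^2$ strategy is exactly the one you describe: bound $\cK$ from above and below first, then bound the principal radii.

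Your $C^0$ argument, however, has a gap at the boundary. If you apply the maximum principle to $h$ itself, then at a boundary extremum the Robin condition $\nabla_\mu h=\cot\theta\,h>0$ is \emph{consistent} with $h$ attaining its maximum there (Hopf's lemma gives $\nabla_\mu h\ge 0$, not a contradiction), and you then have no control over $h_{\mu\mu}$; the inequality $\det(\nabla^2h+hI)\le h^n$ need not hold. The paper's fix (\autoref{s3:C0-estimate-p>n+1}) is to use the test function $h/\ell$, where $\ell=h_{\cC_\theta}$ satisfies the same Robin condition, so that $\nabla_\mu(h/\ell)=0$ on $\partial\cC_\theta$. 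A careful argument---including a Taylor expansion along the inward geodesic to control $\nabla^2_{\mu\mu}(h/\ell)$ at a boundary minimum---then gives $b_{ij}\ge(h/\ell)\delta_{ij}$ at the minimum, hence $\sigma_n\ge(h/\ell)^n$, and the ODE $\frac{d\varphi}{d\tau}\ge\varphi\bigl(1-\varphi^{p-n-1}f\ell^{p-1}\bigr)$ for $\varphi=\min(h/\ell)$; here $p>n+1$ enters exactly as you anticipated. The upper bound is symmetric.

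A smaller point: your proposed monotone quantity $\int_{\cC_\theta}fh^p\,d\xi$ is not obviously monotone along \eqref{GX-p>n+1}, since the volume is \emph{not} preserved by this flow. The paper uses $\widetilde J(\tau)=-V(\widehat{\Sigma_\tau})+\tfrac{1}{p}\int_{\cC_\theta}fh^p\,d\xi$ (\autoref{monJ-noneven}), whose $\tau$-derivative equals $-\int_{\cC_\theta}\frac{(h-fh^p\cK)^2}{h\cK}\,d\xi\le 0$, with equality precisely at solutions of \eqref{Lp-MP-smooth}.
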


The structure of this paper is as follows. In \autoref{Sec2}, we recall some basic facts about capillary hypersurfaces in the Euclidean  half-space. In \autoref{Sec3}, we prove monotonicity formulas for the flows \eqref{GX}, \eqref{GX-p>n+1}. The  uniform $C^k$ estimates for the flows \eqref{GX}, \eqref{GX-p>n+1} are established in \autoref{Sec4}. For the derivation of the $C^2$ estimates along our flows, to circumvent the extra difficulties arising from the boundary, we first derive both lower and upper bounds on the Gauss curvature and then the upper bound on the principal radii of curvature. In \autoref{Sec5}, we provide the proofs of \autoref{MTO} and \autoref{MTO-noneven}.

\section{Preliminaries}
\label{Sec2}
 
Let $\cM$ be a compact orientable smooth manifold of dimension $n$ with boundary $\partial \cM$ and let $X: \cM \to \overline{\mathbb{R}^{n+1}_{+}}$ be a properly embedded smooth hypersurface, i.e., 
\eq{
X(\operatorname{int}\cM) \subset \mathbb{R}^{n+1}_{+}
\quad \text{and} \quad
X(\partial \cM) \subset \partial \mathbb{R}^{n+1}_{+}. 
}
We further assume that $\Sigma = X(\cM)$ is strictly convex, meaning that $\Sigma$ together with $\partial \bbR^{n+1}_+$ bounds a convex body and that $\Sigma$ has a positive definite second fundamental form. Set $\partial \Sigma = X(\partial \cM)$. 
Note that $\partial \Sigma$ is a closed, strictly convex hypersurface in $\partial \mathbb{R}^{n+1}_{+}$ (see, e.g., \cite[Cor. 2.5]{WWX24}). Write $\widehat{\partial\Sigma}$ for the enclosed region by $\partial \Sigma$ in $\partial \mathbb{R}^{n+1}_{+}$. 

For any $r > 0$, the capillary spherical cap of radius $r$ intersecting the support hyperplane $\partial \mathbb{R}^{n+1}_{+}$ at a constant angle $\theta \in (0,\pi)$ is defined by
\eq{ \label{s2:def-C_theta}
\mathcal{C}_{\theta,r} = \{\zeta \in \overline{\mathbb{R}^{n+1}_{+}} : |\zeta - r \cos\theta \, e| = r \}.
}

Let us take $\cM = \cC_\theta = \cC_{\theta,1}$ and $X=\tilde{\nu}^{-1}$. Given a strictly convex capillary hypersurface $\Sigma$, the capillary support function of $\Sigma$, $h = h_{\Sigma} : \mathcal{C}_\theta \to \mathbb{R}$, 
is defined by
\eq{ \label{s2:def-capillary-spt}
h(\xi) := X(\xi) \cdot \nu\big(X(\xi)\big) = \tilde{\nu}^{-1}(\xi) \cdot (\xi - \cos \theta\, e), \quad \forall \, \xi \in \mathcal{C}_\theta.
}
Then $h$ satisfies the capillary boundary condition (cf. \cite[Lem. 2.4]{MWWX25})
\eq{
\nabla_{\mu} h = \cot \theta\, h, \quad \text{on } \partial \mathcal{C}_\theta.
}
When $\Sigma = \cC_\theta$, we have $X(\xi) = \xi$ for all $\xi \in \cC_\theta$ and
\eq{ \label{s2:capillary-spt-fucntion-C}
h_{\cC_\theta}(\xi) &= \xi \cdot (\xi - \cos \theta\, e) = 1 + \cos\theta\, (\nu \cdot e) = \ell(\xi).
}

We write
$
\hat{h} = \hat{h}_{\widehat{\Sigma}} : \mathbb{S}^n \to \mathbb{R}
$
for the (standard) support function of the convex body $\widehat{\Sigma}$, defined by
\eq{
\hat{h}(u) := \max_{x \in \widehat{\Sigma}} u \cdot x, \quad \forall\, u \in \mathbb{S}^n.
}
In particular, we have
\eq{\label{s2:relation-capillary-suppa}
h(\xi)=\hat{h}(\xi-\cos\theta\, e), \quad \forall \xi \in \cC_\theta.
}

In a local orthonormal frame $\{ e_i \}_{i=1}^n$ on $\mathcal{C}_\theta$ (with respect to the standard metric of $\mathcal{C}_\theta$), the second fundamental form $b_{ij}$ of $\Sigma$ is given by
\eq{
  b_{ij} = \nabla_{ij}^2 h + h \, \delta_{ij},
}
and the Gauss curvature of $\Sigma$ at $X(\xi)$ is
\eq{
  \mathcal{K}\big( X(\xi) \big) = \frac{1}{\det\big( \nabla^{2} h(\xi) + h(\xi) I \big)}.
}
It is often more convenient to consider $\mathcal{K}$ as a function on $\mathcal{C}_{\theta}$.

For our later purpose, set
\eq{\label{s2:def-S-theta}
\mathbb{S}_{\theta} = \{ y \in \mathbb{S}^n : y_{n+1} \geq \cos \theta \}.
}
We define the map $T: \mathbb{S}_{\theta} \to \mathcal{C}_\theta$ by
\eq{\label{s2:def-T}
T(u) = u + \cos \theta\, e.
}

\section{Monotone Quantities}
\label{Sec3}
Recall that $\tilde{\nu} = \nu + \cos\theta\, e$,
and hence
\eq{\label{VX}
\tilde{\nu} \cdot \nu = 1 + \cos\theta\, (\nu \cdot e).
}
Let $\Sigma_{\tau}$ be the solution to \eqref{GX}.
Then the capillary support function $h(\cdot,\tau)$ of $\Sigma_{\tau}$ satisfies the following equations:
\eq{\label{Gh}
\left\{
\begin{aligned}
\partial_{\tau} h &= - \frac{(n+1)V(\widehat{\Sigma_0})}{\int_{\mathcal{C}_\theta} f h^{p}\, d\xi} f h^{p} \cK + h, & \quad \text{in } \mathcal{C}_\theta \times [0,\infty), \\
\nabla_{\mu} h &= \cot\theta\, h, & \quad \text{on } \partial \mathcal{C}_\theta \times [0,\infty), \\
h(\xi,0) &= h_0(\xi), & \quad \text{in } \mathcal{C}_\theta.
\end{aligned}
\right.
}

For the flow \eqref{Gh}, we define the functional
\eq{
J(\tau) = \left\{
\begin{aligned}
&-\frac{1}{n+1} \log V(\widehat{\Sigma_0})
+ \frac{1}{p} \log \int_{\mathcal{C}_\theta} f(\xi) h(\xi,\tau)^{p}\, d\xi, & \quad p \neq 0, \\
&-\frac{1}{n+1} \log V(\widehat{\Sigma_0}) + \frac{\int_{\mathcal{C}_\theta} f(\xi) \log h(\xi,\tau)\, d\xi}{\int_{\mathcal{C}_\theta} f(\xi)\, d\xi}, & \quad p = 0.
\end{aligned}
\right.
}
We show that $J(\tau)$ is non-increasing.

\begin{lemma}\label{monJ2}
The functional $J(\tau)$ is non-increasing along the flow \eqref{GX} and equality holds if and only if $\Sigma_{\tau}$ satisfies \eqref{s1:static-eq}.
\end{lemma}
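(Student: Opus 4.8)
The plan is to differentiate $J(\tau)$ in time, substitute the evolution equation \eqref{Gh} for $\partial_\tau h$, integrate by parts using the Robin boundary condition, and then recognize the resulting expression as (a negative multiple of) a quantity that is manifestly nonnegative by an integral (Hölder- or Minkowski-type) inequality, with equality forcing the static equation \eqref{s1:static-eq}. First I would record the first variation of the enclosed volume: since the volume element of $\widehat{\Sigma_\tau}$ is governed by the support function and the Gauss curvature, one has
\eq{
\frac{d}{d\tau} V(\widehat{\Sigma_\tau}) = \int_{\cC_\theta} \frac{\ell\, h_\tau}{\cK}\, d\xi = \int_{\cC_\theta} \ell\, h_\tau\, \det(\nabla^2 h + hI)\, d\xi,
}
which is the capillary analogue of the classical formula $\dot V = \int \dot h\, dS$; this identity (or a reference to it in \autoref{Sec2}-type computations) is what makes the normalization $V(\widehat{\Sigma_\tau})=V(\widehat{\Sigma_0})$ consistent, and more importantly it is the mechanism by which the $\log V$ term in $J$ interacts with the rest.

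Next I would compute, say for $p \neq 0$ (the case $p=0$ is the obvious limit and handled identically with $\log h$ in place of $\tfrac1p h^p$),
\eq{
J'(\tau) = \frac{1}{p}\cdot\frac{\int_{\cC_\theta} f\, h^{p-1} h_\tau\, d\xi}{\int_{\cC_\theta} f h^p\, d\xi},
}
since the $-\tfrac1{n+1}\log V(\widehat{\Sigma_0})$ term is constant in $\tau$ (it uses the fixed initial body). Now substitute $h_\tau = -\dfrac{(n+1)V(\widehat{\Sigma_0})}{\int_{\cC_\theta} f h^p\, d\xi} f h^p \cK + h$ from \eqref{Gh}. The first piece contributes a term proportional to $-\int f^2 h^{2p-1}\cK\, d\xi$ divided by $\big(\int f h^p\, d\xi\big)^2$, and the second piece contributes $\tfrac1p \int f h^p\, d\xi \big/ \int f h^p\, d\xi = \tfrac1p$. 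To turn this into something with a sign I would use the first-variation formula above with the specific choice of test function hidden in the flow — concretely, integrate the identity $\int_{\cC_\theta} \ell\, h_\tau / \cK\, d\xi = (n+1)\dot{V}$ against the flow speed and use $\dot V = 0$ along the normalized flow to get
\eq{
(n+1) V(\widehat{\Sigma_0}) = \frac{\int_{\cC_\theta} \frac{\ell\, h}{\cK}\, d\xi \cdot \int_{\cC_\theta} f h^p\, d\xi}{\int_{\cC_\theta} f^2 h^{2p-1}\, d\xi} \quad\text{(on the static profile)},
}
or rather, away from the static profile, keep these as three separate integrals $A = \int \ell h/\cK\, d\xi$, $B = \int f h^p\, d\xi$, $C = \int f^2 h^{2p-1}\cK\, d\xi$ and show $J'(\tau) = \tfrac1p\big(1 - (n+1)V(\widehat{\Sigma_0})\, C/B^2\big)$ reduces to $A\cdot C \geq B^2$ after using $(n+1)V(\widehat{\Sigma_0}) = A$ (which follows from $\dot V = 0$ and the first-variation formula). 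The inequality $\int \tfrac{\ell h}{\cK}\, d\xi \cdot \int f^2 h^{2p-1}\cK\, d\xi \geq \big(\int f h^p\, d\xi\big)^2$ is exactly Cauchy–Schwarz applied to the functions $\sqrt{\ell h/\cK}$ and $f h^{p-1/2}\sqrt{\cK}$ with respect to $d\xi$, provided one checks the algebraic identity $\sqrt{\ell h/\cK}\cdot f h^{p-1/2}\sqrt{\cK} = f h^p \sqrt{\ell} $ — which is off by a factor of $\sqrt\ell$, so the correct pairing is against $\sqrt{\ell}\, d\xi$ as the measure, or one absorbs $\ell$ appropriately; I would sort out this bookkeeping so that Cauchy–Schwarz gives precisely $AC \ge B^2$. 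For $p > 0$ this yields $J' \le 0$; for $p < 0$ one must check the sign of $\tfrac1p$ flips in tandem with the inequality, which it does because the roles in Cauchy–Schwarz are symmetric — I would present the two sign regimes together. Equality in Cauchy–Schwarz holds iff $\sqrt{\ell h/\cK}$ is a constant multiple of $f h^{p-1/2}\sqrt{\cK}$ pointwise, i.e. $\ell h/\cK = \lambda f^2 h^{2p-1}\cK$, equivalently $1/\cK = \det(\nabla^2 h + hI)$ is a constant multiple of $f h^{p-1}$; matching the constant via the normalization $\dot V = 0$ pins it down to exactly $(n+1)V(\widehat{\Sigma_0})/\int_{\cC_\theta} f h^p\, d\xi$, which is \eqref{s1:static-eq}.

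The main obstacle I anticipate is the boundary term: when I differentiate under the integral sign and in any step that implicitly uses the first-variation-of-volume formula, there is an integration by parts on $\cC_\theta$ that produces a boundary integral over $\partial\cC_\theta$ involving $\nabla_\mu h$ and $h$ (and $\nabla_\mu h_\tau$, $h_\tau$). The Robin condition $\nabla_\mu h = \cot\theta\, h$ — which by \eqref{Gh} is preserved along the flow, so that $\nabla_\mu h_\tau = \cot\theta\, h_\tau$ as well — should make these boundary contributions cancel or vanish identically, exactly as in the closed case the boundary terms are simply absent. I would isolate this cancellation as the one genuinely capillary-specific lemma in the argument; once it is in hand, the monotonicity is the same Cauchy–Schwarz/normalization mechanism as for the classical $L_p$ Minkowski (Gauss curvature) flow. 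A secondary minor point is the $p = 0$ case, where $J$ is defined with $\int f \log h\, d\xi / \int f\, d\xi$; there $J'(\tau) = \int f\, h_\tau/h\, d\xi / \int f\, d\xi$, substitute \eqref{Gh}, and the same volume-normalization identity plus Jensen's (or again Cauchy–Schwarz) inequality gives $J' \le 0$ with equality iff $f\cK h = $ const, matching \eqref{s1:static-eq} at $p=0$. I would remark that all constants appearing are positive (here $\theta \in (0,\tfrac\pi2)$ guarantees $\ell > 0$ and $h > 0$ is part of the hypothesis/maintained along the flow), so no degeneracy issues arise in applying the integral inequalities.
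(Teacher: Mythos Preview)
Your overall strategy---use the volume-preservation identity together with a Cauchy--Schwarz inequality---is correct and essentially equivalent to what the paper does, but the paper's execution is considerably cleaner, and your writeup contains two concrete errors that generate unnecessary confusion.

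First, the errors. The derivative of $\tfrac{1}{p}\log\int_{\cC_\theta} fh^p\, d\xi$ is $\dfrac{\int fh^{p-1}h_\tau\, d\xi}{\int fh^p\, d\xi}$, with \emph{no} leading $\tfrac{1}{p}$; your extra $\tfrac{1}{p}$ factor is what forces you into the muddled ``sign-flip'' discussion for $p<0$, which is incoherent as written (Cauchy--Schwarz does not flip) and is simply not needed once the factor is removed. Second, the first-variation formula in this capillary setting (cited in the paper from \cite{MWWX25}) is $\dfrac{d}{d\tau}V(\widehat{\Sigma_\tau}) = \int_{\cC_\theta}\dfrac{h_\tau}{\cK}\, d\xi$, with \emph{no} $\ell$; this is why you ran into a stray $\sqrt{\ell}$. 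With both fixes your Cauchy--Schwarz argument goes through cleanly for all $p\neq 0$: setting $A = \int h/\cK$, $B = \int fh^p$, $C = \int f^2h^{2p-1}\cK$, volume preservation gives $(n{+}1)V(\widehat{\Sigma_0}) = A$, whence $J'(\tau) = 1 - AC/B^2 \le 0$ by Cauchy--Schwarz applied to $\sqrt{h/\cK}$ and $fh^{p-1/2}\sqrt{\cK}$.

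The paper takes the equivalent but more direct route: add the zero term $-\int_{\cC_\theta}\dfrac{h_\tau}{(n{+}1)V(\widehat{\Sigma_0})\cK}\, d\xi$ to $J'(\tau)$ and recognize the result as a negative perfect square,
\[
J'(\tau) = -\int_{\cC_\theta}\frac{\bigl(-h + \alpha fh^p\cK\bigr)^2}{(n{+}1)V(\widehat{\Sigma_0})\, h\, \cK}\, d\xi \le 0,\qquad \alpha = \frac{(n{+}1)V(\widehat{\Sigma_0})}{\int_{\cC_\theta} fh^p\, d\xi}.
\]
This handles all $p$ uniformly, makes the equality case immediate (the integrand vanishes iff $h = \alpha fh^p\cK$, which is \eqref{s1:static-eq}), and---importantly---involves no integration by parts whatsoever. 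So your anticipated ``boundary-cancellation lemma'' is unnecessary here: once the variation-of-volume formula is taken as input, the whole computation is pointwise algebra under the integral sign. The Robin condition enters only in justifying that cited formula, not in the proof of the lemma itself.
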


\begin{proof}For $\tau > 0$, the derivative $\partial_{\tau} h$ is a capillary function.  
By definition, $V(\widehat{\Sigma_{\tau}})$ is constant in time.  
Hence, by \cite[Prop. 2.9]{MWWX25}, we obtain
\begin{equation} \label{Var-formula}
0 = \frac{d}{d\tau} V(\widehat{\Sigma_\tau}) 
  = \int_{\mathcal{C}_\theta} \frac{\partial_\tau h}{\mathcal{K}} \, d\xi.
\end{equation}
For $p \neq 0$, we have
\eq{
\frac{d}{d\tau} J(\tau) &= -\int_{\mathcal{C}_\theta} \frac{\partial_\tau h}{(n+1)V(\widehat{\Sigma_{0}}) \cK} \, d\xi + \frac{\int_{\mathcal{C}_\theta} f h^{p-1}\partial_{\tau} h \, d\xi}{\int_{\mathcal{C}_\theta} f h^{p} \, d\xi} \\
&= -\int_{\mathcal{C}_\theta} \frac{\left( -h + \frac{(n+1) V(\widehat{\Sigma_{0}})f h^p \cK}{\int_{\mathcal{C}_\theta} f h^{p} \, d\xi} \right)^{2}}{(n+1) V(\widehat{\Sigma_{0}}) h \cK} \, d\xi \leq 0.
}
Similarly, for $p = 0$, we have
\eq{
\frac{d}{d\tau} J(\tau) &= -\int_{\mathcal{C}_\theta} \frac{\partial_{\tau} h}{(n+1)V(\widehat{\Sigma_{0}}) \cK} \, d\xi + \frac{\int_{\mathcal{C}_\theta} \frac{f}{h}\partial_{\tau} h \, d\xi}{\int_{\mathcal{C}_\theta} f \, d\xi} \\
&= -\int_{\mathcal{C}_\theta} \frac{\left( -h + \frac{(n+1)V(\widehat{\Sigma_{0}}) f \cK}{\int_{\mathcal{C}_\theta} f \, d\xi} \right)^{2}}{(n+1)V(\widehat{\Sigma_{0}}) h \cK} \, d\xi \leq 0.
}
For each $p$, $\frac{d}{d\tau}J(\tau) = 0$ if and only if
\eq{
h^{1-p}\det(\nabla^{2}h + h I) = \frac{(n+1)V(\widehat{\Sigma_{0}})f}{\int_{\cC_\theta}f h^p \, d\xi}.
}
\end{proof}

Now we turn to the flow \eqref{GX-p>n+1}. Along this flow, the capillary support function $h(\cdot,\tau)$ of $\Sigma_{\tau}$ solves
\eq{ \label{h-flow 2}
\left\{ \begin{aligned}
\partial_{\tau} h &= - f h^p \cK + h, &\quad \text{in $\cC_\theta \times [0,\infty)$},\\
\nabla_\mu h &= \cot\theta\, h, &\quad \text{on $\partial \cC_\theta \times [0,\infty)$}, \\
h(\cdot,0) &= h_0(\cdot), &\quad \text{in $\cC_\theta$}.
\end{aligned}\right.
}
We define the functional
\eq{
\widetilde{J}(\tau) = -V(\widehat{\Sigma_{\tau}}) + \frac{1}{p}\int_{\cC_\theta}f(\xi)h(\xi,\tau)^p d\xi,
}
and we show that $\widetilde{J}(\tau)$ is non-increasing along the flow \eqref{GX-p>n+1}.

\begin{lemma}\label{monJ-noneven}
The functional $\widetilde{J}(\tau)$ is non-increasing along the flow \eqref{GX-p>n+1} and equality holds if and only if $\Sigma_{\tau}$ satisfies \eqref{Lp-MP-smooth}.
\end{lemma}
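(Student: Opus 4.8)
The plan is to follow the same strategy as in the proof of \autoref{monJ2}, using the first variation of the enclosed volume under capillary normal perturbations; the only structural change here is that $V(\widehat{\Sigma_\tau})$ is no longer constant along \eqref{GX-p>n+1}, which is precisely why $\widetilde{J}$ carries the term $-V(\widehat{\Sigma_\tau})$ rather than a fixed normalizing constant. Fix $\tau > 0$. Differentiating the Robin boundary condition in \eqref{h-flow 2} gives $\nabla_\mu(\partial_\tau h) = \cot\theta\,\partial_\tau h$, so $\partial_\tau h$ is a capillary function and \cite[Prop. 2.9]{MWWX25} applies, yielding
\eq{
\frac{d}{d\tau} V(\widehat{\Sigma_\tau}) = \int_{\cC_\theta} \frac{\partial_\tau h}{\cK}\, d\xi.
}
Differentiating the remaining term of $\widetilde{J}$ under the integral sign gives $\frac{d}{d\tau}\big(\tfrac{1}{p}\int_{\cC_\theta} f h^p\, d\xi\big) = \int_{\cC_\theta} f h^{p-1}\,\partial_\tau h\, d\xi$, and hence
\eq{
\frac{d}{d\tau}\widetilde{J}(\tau) = \int_{\cC_\theta} \Big( f h^{p-1} - \frac{1}{\cK}\Big)\,\partial_\tau h\, d\xi.
}

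Next I would substitute the support-function flow equation from \eqref{h-flow 2}. Since $\partial_\tau h = -f h^p \cK + h = -h\,\cK\,\big( f h^{p-1} - \cK^{-1}\big)$, the integrand becomes a signed square with weight $h\cK$, giving
\eq{
\frac{d}{d\tau}\widetilde{J}(\tau) = -\int_{\cC_\theta} h\,\cK\, \Big( f h^{p-1} - \frac{1}{\cK}\Big)^{2}\, d\xi \leq 0,
}
where non-negativity of the integrand uses $h > 0$ and $\cK > 0$, which persist along the flow because the evolving hypersurface stays strictly convex with positive capillary support function. This establishes the monotonicity. For the equality case, $\frac{d}{d\tau}\widetilde{J}(\tau) = 0$ at some time forces $f h^{p-1} \equiv \cK^{-1} = \det(\nabla^2 h + hI)$ on $\cC_\theta$; combined with $\nabla_\mu h = \cot\theta\, h$, which holds for every $\tau$ by \eqref{h-flow 2}, this is precisely \eqref{Lp-MP-smooth}. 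Conversely, if $\Sigma_\tau$ solves \eqref{Lp-MP-smooth} then $\partial_\tau h \equiv 0$ and the derivative vanishes.

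The computation itself is routine, so I do not anticipate a genuine obstacle. The two points requiring care are (i) confirming that $\partial_\tau h$ is indeed a capillary function, so that the variational identity \cite[Prop. 2.9]{MWWX25} may legitimately be invoked on the manifold-with-boundary $\cC_\theta$; and (ii) knowing that $h>0$ and strict convexity are preserved along the flow, so that the integrand in the last display has a definite sign — both are part of the standing hypotheses and are underpinned by the a priori estimates developed in \autoref{Sec4}.
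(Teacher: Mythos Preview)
Your proof is correct and follows essentially the same approach as the paper: both compute $\frac{d}{d\tau}\widetilde{J}(\tau)$ via the first-variation formula for volume and rewrite the result as a negative-definite integral, with your expression $-h\cK\big(fh^{p-1}-\cK^{-1}\big)^2$ being algebraically identical to the paper's $-\frac{(-h+fh^p\cK)^2}{h\cK}$. Your write-up is slightly more explicit about why the variational identity applies (checking that $\partial_\tau h$ is capillary), but otherwise the arguments coincide.
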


\begin{proof}
We have
\eq{
\frac{d}{d\tau} \widetilde{J}(\tau) &= -\int_{\mathcal{C}_\theta} \frac{\partial_{\tau} h}{ \cK } \, d\xi + \int_{\mathcal{C}_\theta} f h^{p-1}\partial_{\tau} h \, d\xi \\
&= -\int_{\mathcal{C}_\theta} \frac{\left( -h + f h^p \cK \right)^{2}}{ h \cK} \, d\xi \leq 0.
}
It is clear that $\frac{d}{d\tau}\widetilde J(\tau) = 0$ if and only if $ h^{1-p}\det(\nabla^{2}h + h I) = f$.
\end{proof}

\section{Regularity Estimates}\label{Sec4}
\subsection{Regularity Estimates I}\label{Sec4-1}
In this subsection, we deduce the a priori estimates for solutions to the flow \eqref{GX}. First, we establish both the uniform lower and upper bounds of the capillary support function.

\begin{lemma}\label{s3:C0-estimate}
Let $p > -n-1$ and $\theta \in (0,\frac{\pi}{2})$. Suppose $\Sigma_{\tau}$ is a smooth, even, and strictly convex solution to the flow \eqref{GX}. Then there exists a positive constant $C$, depending only on $n, p$, $\theta, \min_{\mathcal{C}_\theta}f, \max_{\mathcal{C}_\theta}f$ and $\Sigma_{0}$, such that
\eq{\label{s3:C0-estimate*}
\frac{1}{C} \leq h(\xi,\tau) \leq C, \quad \forall (\xi,\tau) \in \mathcal{C}_\theta \times (0,\infty).
}
\end{lemma}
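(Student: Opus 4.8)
The plan is to prove the $C^0$ bound by combining the monotonicity from Lemma~\ref{monJ2} with the evenness hypothesis, exactly as in the classical even $L_p$ Minkowski flow arguments. Write $h_{\max}(\tau) = \max_{\mathcal{C}_\theta} h(\cdot,\tau)$ and $h_{\min}(\tau) = \min_{\mathcal{C}_\theta} h(\cdot,\tau)$. The first step is to record that, since $\Sigma_\tau$ is even, for each $\tau$ the capillary convex body $\widehat{\Sigma_\tau}$ has a center of symmetry, which forces a two-sided comparison between $h_{\max}$ and $h_{\min}$: there is a dimensional constant (depending on $\theta$) such that $h_{\min}(\tau) \geq c(n,\theta)\, h_{\max}(\tau)$. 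This is the only place evenness enters — a general capillary body only gives $h_{\max} \geq h_{\min} \geq 0$ with no lower control. One gets this from \eqref{s2:relation-capillary-suppa}, passing to the Euclidean support function $\hat h$ of $\widehat{\Sigma_\tau}$ on $\mathbb{S}_\theta$ and using that an origin-symmetric convex body satisfies $\hat h(u) + \hat h(-u) = $ width $\geq$ diameter$/\sqrt{n}$-type inequalities, together with the fact that $\xi \mapsto \xi - \cos\theta\, e$ maps $\mathcal{C}_\theta$ onto $\mathbb{S}_\theta$ and the relevant normal directions stay in a fixed compact set where $\ell$ is bounded above and below.

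**Upper bound.**

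Next I would bound $\int_{\mathcal{C}_\theta} f h^p\, d\xi$ from above and below using $V(\widehat{\Sigma_\tau}) = V(\widehat{\Sigma_0})$. Since $V(\widehat{\Sigma_\tau})$ is, up to constants, comparable to $h_{\max}(\tau)^{n+1}$ (again using evenness to replace $\widehat{\Sigma_\tau}$ by a ball-like body of radius $\sim h_{\max}$, and that the region lies in $\overline{\mathbb{R}^{n+1}_+}$), the constancy of volume gives $c \leq h_{\max}(\tau) \leq C$ for constants depending on $\Sigma_0, n, \theta$. Combined with the first step, this immediately yields $h_{\min}(\tau) \geq c(n,\theta)\, h_{\max}(\tau) \geq c' > 0$, and $h_{\max}(\tau) \leq C$, which is exactly \eqref{s3:C0-estimate*}. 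In fact once the volume is constant, both bounds drop out at once and the monotonicity of $J$ is not even strictly needed for the $C^0$ estimate itself; but I would keep $J$ in reserve in case the volume-constancy argument needs $h_{\min} > 0$ a priori to even make sense of $\int f h^p$ when $p < 0$ — there one uses that along the (short-time) smooth flow $h$ stays positive, and $J(\tau) \leq J(0)$ bounds $\int f h^p\, d\xi$ from the appropriate side (from above when $p<0$, from below when $p>0$), which converts to a bound on $h_{\min}$ or $h_{\max}$ respectively.

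**The main obstacle.**

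The step I expect to be the crux is the geometric comparison $h_{\min} \gtrsim h_{\max}$ for even capillary bodies with the correct, explicit dependence on $\theta$ (and the matching two-sided bound $V(\widehat{\Sigma_\tau}) \sim h_{\max}^{n+1}$). The subtlety is that the capillary support function $h$ is defined on the cap $\mathcal{C}_\theta$ rather than on a full sphere, and evenness is only evenness in the first $n$ coordinates — there is no symmetry in the $E_{n+1}$ direction — so one must be careful that the ``center of symmetry'' argument still produces a genuine lower bound on $h$ over all of $\mathcal{C}_\theta$, including near $\partial\mathcal{C}_\theta$. I would handle this by working with $\hat h$ on $\mathbb{S}_\theta$ via \eqref{s2:relation-capillary-suppa}: origin-symmetry of $\widehat{\Sigma_\tau}$ (which follows because $\widehat{\Sigma_\tau}$ is the union of the even body $\Sigma_\tau$ with its reflection, bounded by the symmetric hyperplane $\partial\mathbb{R}^{n+1}_+$) gives $\hat h(u) = \hat h(-u)$ for $u$ in the equatorial directions, and then a standard John-ellipsoid / width argument on the full body $\widehat{\Sigma_\tau}$ bounds $\min_{\mathbb{S}^n}\hat h$ below by $c(n)\max_{\mathbb{S}^n}\hat h$; restricting to $u = \xi - \cos\theta\, e \in \mathbb{S}_\theta$ loses only the factor controlling how much of $\mathbb{S}^n$ the set $\mathbb{S}_\theta$ covers, which is a function of $\theta$ alone and is bounded away from $0$ since $\theta \in (0,\frac{\pi}{2})$. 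The remaining estimates — comparing $\int f h^p$ to $h_{\max}^p \cdot |\mathcal{C}_\theta|$ up to $\min f, \max f$, and chaining the inequalities — are routine.
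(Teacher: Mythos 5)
Your proposal has a fatal gap at the very step you flag as the crux. You assert that origin-symmetry of the doubled body forces a two-sided comparison $h_{\min}(\tau) \geq c(n,\theta)\,h_{\max}(\tau)$, attributing this to ``John-ellipsoid / width arguments.'' This is false. Origin-symmetric convex bodies of fixed volume can degenerate into arbitrarily thin slabs: already in $\mathbb{R}^2$ an origin-centered ellipse with semi-axes $a$ and $1/a$ has $\hat h_{\min}/\hat h_{\max} = 1/a^2 \to 0$ while the area stays constant. The John ellipsoid theorem controls the shape of $K$ \emph{relative to its own John ellipsoid}, not the ratio $\hat h_{\min}/\hat h_{\max}$. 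For the same reason your claim that $V(\widehat{\Sigma_\tau})$ is ``comparable to $h_{\max}(\tau)^{n+1}$'' fails in the lower direction: the inclusion in a ball of radius $h_{\max}$ gives $V \lesssim h_{\max}^{n+1}$, but nothing forces $V \gtrsim h_{\max}^{n+1}$ when the body elongates. Consequently your remark that ``once the volume is constant, both bounds drop out at once and the monotonicity of $J$ is not even strictly needed'' is precisely backwards: constancy of volume alone gives no $C^0$ bound, and the monotonicity of $J$ is the essential non-collapsing input.

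The paper's actual argument uses $J(\tau) \leq J(0)$ in an indispensable way. For $-n-1 < p < 0$ this monotonicity gives a lower bound on $V(\Xi_\tau)\bigl(\int_{\mathbb{S}^n}\tilde f\,\hat h_{\Xi_\tau}^p\,du\bigr)^{-(n+1)/p}$, and the bound on $\hat h_{\Xi_\tau}$ is then deduced from a result of \cite{HI25} for origin-symmetric bodies (which encodes the correct non-degeneracy inequality, not the false one you used). For $p \geq 0$ the paper picks a contact point $q_\tau$ of $\tfrac{1}{R_\tau}\Sigma_\tau$ with $\mathcal{C}_\theta$ and uses evenness only to produce the \emph{second} point $R_\tau(-q_1,0,\ldots,0,q_{n+1}) \in \Sigma_\tau$; together these two inclusions give the pointwise lower bound $h(\xi,\tau) \geq C_\theta R_\tau\,|p_\tau\cdot(\xi + \cos\theta\,E_{n+1})|$. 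Feeding this into the $J$-monotonicity bounds $R_\tau$ from above because $\int_{\mathbb{S}^n}\log|u\cdot p|\,du$ (for $p=0$) or $\int_{\mathbb{S}_\theta}|u\cdot p|^p\,du$ (for $p>0$) is bounded below uniformly in $p\in\mathbb{S}^n$. The constancy of volume then yields the lower bound on $h$ from the upper one. Your proposal never touches this mechanism and instead relies on a geometric comparison that does not hold, so it does not prove the lemma.
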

\begin{proof}
\noindent\emph{Case 1:} $-(n+1) < p < 0$. From \autoref{monJ2}, we have
\eq{
\log V(\widehat{\Sigma_0}) - \frac{n+1}{p} \log \int_{\mathcal{C}_\theta} f(\xi) h(\xi,\tau)^{p} \, d\xi \geq -(n+1) J(0).
}
It follows that
\eq{\label{bb}
V(\widehat{\Sigma_{\tau}}) \left( \int_{\mathcal{C}_\theta} f(\xi) h(\xi,\tau)^{p} \, d\xi \right)^{-\frac{n+1}{p}} \geq e^{-(n+1) J(0)}.
}
Let $\Xi_{\tau}$ be the convex body enclosed by $\Sigma_{\tau}$ and its reflection across $\partial \mathbb{R}^{n+1}_{+}$. Define the function $\tilde{f} : \mathbb{S}^n \to (0,\infty)$ by
\eq{
\tilde{f}(u) = \left\{
\begin{array}{l@{\ }l}
f \circ T(u), & \quad u_{n+1} \geq \cos \theta, \\
f \circ T(u_1, \ldots, u_n, -u_{n+1}), & \quad u_{n+1} \leq -\cos \theta, \\
f \circ T(\sin \theta\, \tilde{u} + \cos \theta\, E_{n+1}), & \quad -\cos \theta \leq u_{n+1} \leq \cos \theta,
\end{array}
\right.
}
where $\tilde{u} := \dfrac{u - (u \cdot e)e}{|u - (u \cdot e)e|} \in \mathbb{S}^{n} \cap e^\bot$ and the map $T$ is given by \eqref{s2:def-T}. Note that $\tilde{f}$ is the symmetric extension of $f$ from $\mathcal{C}_\theta$ to the whole $\mathbb{S}^n$. Note that $\hat{h}_{\Xi_{\tau}}(u) = h_{\Sigma_{\tau}} \circ T(u)$ for $u \in \mathbb{S}_{\theta}$, where $\mathbb{S}_\theta$ is defined in \eqref{s2:def-S-theta}. By \eqref{bb} we have
\eq{\label{b2}
V(\Xi_{\tau}) \left( \int_{\mathbb{S}^n} \tilde{f} \hat{h}^{p}_{\Xi_{\tau}} \, du \right)^{-\frac{n+1}{p}} \geq e^{-(n+1) J(0)}.
}
In view of \cite[Lem. 2.6]{HI25}, the capillary support function is uniformly bounded above and below away from zero.

For the remaining cases, with the volume fixed, it is sufficient to prove the uniform upper bound on the capillary support function. Given that the origin lies in the interior of the flat side $\widehat{\partial \Sigma_{\tau}} \subset \partial \mathbb{R}^{n+1}_{+}$, there exist $R_{\tau} > 0$ and $q_{\tau} \in \overline{\mathbb{R}_+^{n+1}}$ such that
\eq{
\label{CR}
\frac{1}{R_{\tau}}\widehat{\Sigma_{\tau}} \subseteq \widehat{\mathcal{C}_\theta}, \quad q_{\tau} \in \frac{1}{R_{\tau}}\Sigma_{\tau} \cap \mathcal{C}_\theta.
}
Rotating the coordinates around the $E_{n+1}$-axis, one may assume that
\eq{
q_{\tau} = (q_1, 0, \ldots, 0, q_{n+1}) \in \mathcal{C}_\theta, \quad 0 \leq q_{n+1} \leq 1 - \cos\theta,
}
where $q_1 = \sqrt{1 - (q_{n+1} + \cos\theta)^2} \geq 0$. We have
\eq{
|q_{\tau}|^2 = q_1^2 + q_{n+1}^2 = 1 - \cos^2\theta - 2\cos\theta\, q_{n+1} \geq (1 - \cos\theta)^2.
}

By the evenness of the hypersurface $\Sigma_{\tau}$, we also have
\eq{
  R_{\tau}(-q_1, 0, \ldots, 0, q_{n+1}) \in \Sigma_{\tau}.
}
Therefore, by \eqref{s2:relation-capillary-suppa}, we obtain
\eq{
  h(\xi, \tau) &\geq R_{\tau} (q_1, 0, \ldots, 0, q_{n+1}) \cdot (\xi_1, \ldots, \xi_n, \xi_{n+1} + \cos\theta) \\
  &= R_{\tau} \big( \xi_1 q_1 + (\xi_{n+1} + \cos\theta) q_{n+1} \big),
}
as well as
\eq{
  h(\xi, \tau) \geq R_{\tau} \big( -\xi_1 q_1 + (\xi_{n+1} + \cos\theta) q_{n+1} \big).
}
Since $q_{n+1} \geq 0$ and $\xi_{n+1} + \cos\theta \geq 0$, we obtain
\eq{
h(\xi, \tau) \geq R_{\tau} |\xi_1| q_1, \quad h(\xi, \tau) \geq R_{\tau} (\xi_{n+1} + \cos\theta) q_{n+1}.
}
Therefore, for $p_{\tau} := q_{\tau}/|q_{\tau}| \in \mathbb{S}^n$, we have
\eq{\label{ht}
h(\xi, \tau) &\geq \frac{R_{\tau}}{2} \big| q_{\tau} \cdot (\xi_1, \ldots, \xi_n, \xi_{n+1} + \cos\theta) \big| \\
&\geq C_{\theta} R_{\tau} \big| p_{\tau} \cdot (\xi+\cos\theta E_{n+1}) \big|,
}
where $C_{\theta} := \frac{1}{2} (1 - \cos\theta)$.

\noindent\emph{Case 2:} $p = 0$. 
By \autoref{monJ2}, we have
\eq{\label{aa}
J(0) + \frac{1}{n+1} \log V(\widehat{\Sigma_0}) \geq \frac{\int_{\mathcal{C}_\theta} f(\xi) \log h(\xi,\tau) \, d\xi}{\int_{\mathcal{C}_\theta} f(\xi) \, d\xi}.
}
Substituting \eqref{ht} into \eqref{aa}, we have
\eq{\label{a2}
&~(\log (C_{\theta}R_{\tau})) \int_{\mathcal{C}_\theta} f(\xi) \, d\xi + (\max_{\cC_{\theta}} f)\int_{\bbS^n} \log |u \cdot p_{\tau}|\, du\\
\leq &~(\log (C_{\theta}R_{\tau})) \int_{\mathcal{C}_\theta} f(\xi) \, d\xi + \int_{\bbS_\theta} f(u+\cos\theta\,e) \log |u \cdot p_{\tau}| \, du\\
\leq &~\int_{\mathcal{C}_\theta} f(\xi) \log h(\xi,\tau) \, d\xi
\leq \left( J(0) + \frac{1}{n+1} \log V(\widehat{\Sigma_0}) \right)\int_{\mathcal{C}_\theta} f(\xi) \, d\xi.
}
Since the integral $\int_{\bbS^n} \log |u \cdot p_{\tau}|\, du$ does not depend on $p_{\tau}$, for some positive constant $C$, depending only on $n, p$, $\theta, \min_{\mathcal{C}_\theta}f, \max_{\mathcal{C}_\theta}f$ and $\Sigma_{0}$,  we obtain
\eq{
R_{\tau}\leq C.
}
\emph{Case 3:} $p > 0$. Using \autoref{monJ2} and \eqref{ht}, we have
\eq{
V(\widehat{\Sigma_0})^{\frac{p}{n+1}} e^{p J(0)} &\geq \int_{\mathcal{C}_\theta} f(\xi) h(\xi,\tau)^{p} \, d\xi \\
&\geq C_{\theta}^pR_{\tau}^{p} (\min_{\mathcal{C}_\theta} f) \int_{\bbS_{\theta}} |u\cdot p_{\tau}|^p \, du.
}
Note that $w: \bbS^n\to \bbR$ defined as $w(x)=\int_{\bbS_{\theta}} |u\cdot x|^p \, du$ is a positive continuous function. Hence, $w\geq c_{\theta}>0$. 
Thus we obtain
\eq{
R_{\tau} &\leq \frac{V(\widehat{\Sigma_0})^{\frac{1}{n+1}} e^{J(0)}}{c_{\theta}^\frac{1}{p}C_{\theta}(\min_{\mathcal{C}_\theta} f)^\frac{1}{p}}.
}
\end{proof}

\begin{lemma}\label{C1}
Let $p > -n-1$ and $\theta \in (0,\frac{\pi}{2})$. Suppose $\Sigma_{\tau}$ is a smooth, even, and strictly convex solution to the flow \eqref{GX}. Then there exists a positive constant $C$, depending only on $n, p$, $\theta, \min_{\mathcal{C}_\theta}f, \max_{\mathcal{C}_\theta}f$ and $\Sigma_{0}$, such that
\eq{
|\nabla h(\cdot,\tau)| \leq C.
}
\end{lemma}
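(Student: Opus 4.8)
The plan is to reduce the gradient bound to the $C^0$ bound of \autoref{s3:C0-estimate}, by way of the classical identity expressing the squared position vector of a convex body as the sum of the squared support function and the squared spherical gradient of the support function, transported to the capillary picture through \eqref{s2:relation-capillary-suppa}. Concretely, fix $\tau>0$, let $\widehat{\Sigma_\tau}$ be the capillary convex body bounded by $\Sigma_\tau$, and let $\hat h=\hat h_{\widehat{\Sigma_\tau}}$ be its standard support function on $\bbS^n$. The map $T^{-1}\colon\cC_\theta\to\bbS_\theta$, $\xi\mapsto\xi-\cos\theta\,e$, is the restriction of a translation of $\bbR^{n+1}$, hence a Riemannian isometry of $(\cC_\theta,g_{\cC_\theta})$ onto $(\bbS_\theta,g_{\bbS^n})$; combined with \eqref{s2:relation-capillary-suppa}, which says $h=\hat h\circ T^{-1}$, it gives $|\nabla h(\xi,\tau)|=|\bar\nabla\hat h(u)|$ at $u:=\xi-\cos\theta\,e$, where $\bar\nabla$ is the gradient on $\bbS^n$. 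Since $u=\nu$ is exactly the outer unit normal of $\widehat{\Sigma_\tau}$ at $X(\xi,\tau)=\tilde\nu^{-1}(\xi)$, the classical inverse Gauss map formula gives $X(\xi,\tau)=\hat h(u)\,u+\bar\nabla\hat h(u)$ with $u\perp\bar\nabla\hat h(u)$, so
\eq{
|X(\xi,\tau)|^2 &= \hat h(u)^2+|\bar\nabla\hat h(u)|^2 = h(\xi,\tau)^2+|\nabla h(\xi,\tau)|^2 .
}
On the interior of $\cC_\theta$ this is clear, and since both sides are continuous up to $\partial\cC_\theta$ it holds there too. Hence $|\nabla h(\xi,\tau)|\le|X(\xi,\tau)|$, and it remains only to bound $|X(\xi,\tau)|$, i.e. the circumradius of $\widehat{\Sigma_\tau}$, uniformly in $\tau$.

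Next I would bound that circumradius using only the upper bound $h\le C$ furnished by \autoref{s3:C0-estimate}, equivalently $\hat h(u)\le C$ for all $u\in\bbS_\theta$. Let $x=(x',x_{n+1})\in\widehat{\Sigma_\tau}$; as $\widehat{\Sigma_\tau}\subset\overline{\bbR^{n+1}_+}$ we have $x_{n+1}\ge0$. Testing $x\cdot u\le\hat h(u)$ with $u=E_{n+1}\in\bbS_\theta$ gives $x_{n+1}\le C$, and, if $x'\neq0$, with $u=\sin\theta\,(x'/|x'|)+\cos\theta\,E_{n+1}\in\bbS_\theta$ gives $\sin\theta\,|x'|+\cos\theta\,x_{n+1}\le C$, hence $|x'|\le C/\sin\theta$. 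Thus $|x|\le C(1+1/\sin\theta)$ for every $x\in\widehat{\Sigma_\tau}$, and in particular $|X(\xi,\tau)|\le C(1+1/\sin\theta)$ for all $(\xi,\tau)\in\cC_\theta\times(0,\infty)$. Substituting into the display above yields $|\nabla h(\xi,\tau)|^2\le|X(\xi,\tau)|^2\le C^2(1+1/\sin\theta)^2$, which is the desired estimate, with the constant depending only on the quantities allowed in the statement (through \autoref{s3:C0-estimate}).

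There is no genuine obstacle in this argument; the only points needing a little care are the identification of $T$ as an ambient isometry — so that the round-metric gradient of $h$ on $\cC_\theta$ really equals the spherical gradient of $\hat h$ on $\bbS_\theta$ — and the validity of the reconstruction identity on the free boundary $\partial\cC_\theta$, which follows by continuity from the interior. I note that the same reasoning applies verbatim to the flow \eqref{GX-p>n+1} once the analogous $C^0$ bound is in hand.
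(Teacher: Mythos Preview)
Your argument is correct. The paper's own proof is a one-line citation: ``By \autoref{s3:C0-estimate}, $h \le C$. The claim follows from \cite[Lem.~4.8]{HIS25}.'' What you have written is, in effect, a self-contained version of that cited lemma: you transport $h$ to the standard support function $\hat h$ on $\bbS_\theta$ via the isometry $T$, invoke the classical identity $|X|^2=\hat h^2+|\bar\nabla\hat h|^2$, and then bound the circumradius of $\widehat{\Sigma_\tau}$ from the pointwise bound $\hat h\le C$ on $\bbS_\theta$. Each step is sound; the only mildly delicate point --- that the identity extends to $\partial\cC_\theta$ --- you handle correctly by continuity, since $h$ is smooth up to the boundary along the flow. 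Your approach and the paper's are thus not really different routes: you supply the content that the paper defers to an external reference, and the underlying mechanism (convexity plus $C^0$ bound $\Rightarrow$ diameter bound $\Rightarrow$ gradient bound) is the same.
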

\begin{proof}
By \autoref{s3:C0-estimate}, $h \leq C$. The claim follows from \cite[Lem. 4.8]{HIS25}.
\end{proof}

Next, we derive a uniform upper bound for the Gauss curvature $\cK(\cdot, \tau)$.
\begin{lemma}\label{lemma-Gauss-curv}
Let $p > -n-1$ and $\theta \in (0,\frac{\pi}{2})$. Suppose $\Sigma_{\tau}$ is a smooth, even, and strictly convex solution to the flow \eqref{GX}. Then there exists a  positive constant $C$, depending only on $n, p$, $\theta, \min_{\mathcal{C}_\theta}f, \max_{\mathcal{C}_\theta}f$ and $\Sigma_0$, such that
\eq{
\cK \leq C.
}
\end{lemma}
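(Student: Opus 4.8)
The plan is to establish an upper bound for the Gauss curvature $\cK$ by a maximum principle argument applied to a suitably constructed auxiliary function on $\cM \times [0,\infty)$ (or equivalently on $\cC_\theta \times [0,\infty)$). First I would recall that along the flow \eqref{GX} the Gauss curvature evolves by a parabolic equation; writing $\cK$ as a function on $\cC_\theta$ via $\cK = 1/\det(\nabla^2 h + hI)$ and differentiating \eqref{Gh} in $\tau$, one obtains an evolution equation for $\cK$ whose leading term is an operator of the form $\cK^{\,?}\, \dot F^{ij}\nabla_{ij}$, where $\dot F^{ij}$ is the linearization of $\det^{1/n}$ (or of $\log\det$) with respect to the second fundamental form. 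The natural quantity to run the maximum principle on is not $\cK$ alone but a normalized version such as $\Phi = \dfrac{-\partial_\tau h + h}{h} = \dfrac{(n+1)V(\widehat{\Sigma_0})}{\int_{\cC_\theta} f h^p\,d\xi}\, f h^{p-1}\cK$ (so that bounding $\Phi$ from above, together with the $C^0$ bound of \autoref{s3:C0-estimate} and the positivity of $f$, bounds $\cK$); indeed $\Phi$ is precisely the speed in the support-function flow divided by $h$, and such ratios typically satisfy clean evolution equations in which the troublesome reaction terms have a favorable sign at an interior spatial maximum.

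The key steps, in order, are: (1) derive the evolution equation of the chosen auxiliary function $\Phi$ (or $\log\Phi$), using \eqref{Gh} and the formula for the linearized Monge–Ampère operator, and identify the gradient and zeroth-order terms; (2) check the boundary behavior on $\partial\cC_\theta \times [0,\infty)$ — here the capillary/Robin condition $\nabla_\mu h = \cot\theta\, h$ must be differentiated and combined with the umbilicity of the boundary of the spherical cap $\cC_\theta$ to show that at a boundary maximum of $\Phi$ the Hopf-type inequality $\nabla_\mu \Phi \le 0$ cannot hold strictly unless $\Phi$ is controlled, thereby ruling out boundary maxima (or reducing them to interior ones); (3) at an interior space-time maximum, use $\nabla\Phi = 0$ and $\nabla^2\Phi \le 0$ together with $\partial_\tau\Phi \ge 0$ to extract a differential inequality of the form $0 \le \partial_\tau \Phi \le C_1 \Phi - c_2 \Phi^{1+1/n}$ (the negative higher-power term comes from the concavity of $\det^{1/n}$ and the lower bound on $h$), whose positivity forces $\Phi \le C$; and (4) translate the bound on $\Phi$ back into $\cK \le C$ using \autoref{s3:C0-estimate} and $\min_{\cC_\theta} f > 0$, noting that $\int_{\cC_\theta} f h^p\,d\xi$ is bounded above and below by the $C^0$ estimate.

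The main obstacle I expect is step (2): the boundary term. Unlike the classical closed case, here one cannot simply say "the maximum is interior"; one must carefully exploit the geometry of $\cC_\theta$ (totally umbilic boundary, the identity \eqref{s2:capillary-spt-fucntion-C} for $\ell$, and the commutation of $\nabla_\mu$ with the relevant derivatives) to show that the Robin condition propagates to a sign-definite boundary condition for $\Phi$ that is compatible with the Hopf lemma. A clean way to sidestep part of this difficulty, which I would try first, is to perform the double-normal/boundary computation exactly as in the second-derivative estimates for the linear Robin problem (cf. the references \cite{LTU86,MQ19} and the treatment in \cite{MWW25,MWW25a}), but only for the first-order quantity $\Phi$, where the algebra is substantially lighter; the paper's own remark that the $C^2$ estimate is obtained "in a much simpler and more direct way" by first bounding $\cK$ suggests this is indeed the intended route. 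If the boundary term cannot be made to vanish outright, the fallback is to add a barrier term such as $\varepsilon\,\langle X, e\rangle$ or a multiple of $h$ to $\Phi$ to force the maximum into the interior, at the cost of a harmless additive constant.
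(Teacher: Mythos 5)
Your general outline is correct and close to the paper's actual argument, but the specific auxiliary function you propose is subtly off in a way that matters. The paper works with
\begin{equation}
Q = \frac{-h_\tau}{h - \varepsilon_0}, \qquad \varepsilon_0 := \tfrac12\min_{\mathcal{C}_\theta\times[0,\infty)} h,
\end{equation}
not with $\Phi = (h - h_\tau)/h$. The $\varepsilon_0$-shift in the denominator is not cosmetic; it does \emph{two} things your $\Phi$ does not. First, at the boundary: differentiating the Robin condition in $\tau$ gives $h_{\tau\mu} = \cot\theta\, h_\tau$, and combined with $h_\mu = \cot\theta\, h$ one finds $\nabla_\mu\Phi \equiv 0$ on $\partial\mathcal{C}_\theta$ for your choice (the $h$ in the numerator and the $h$ in the denominator exactly cancel the Robin contributions), whereas for $Q$ the shift produces $\nabla_\mu Q = -\varepsilon_0\cot\theta\, Q/(h-\varepsilon_0) < 0$ strictly, ruling out boundary maxima outright. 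Second, and more importantly, at an interior maximum the shift is precisely what generates the favorable high-order negative term: the second-derivative test gives $-h_{\tau ii} - h_\tau \le Q(b_{ii} - \varepsilon_0)$, which after contracting with $\sigma_n^{ii}$ yields $\cK_\tau/\cK \le nQ(1 - \varepsilon_0\cK^{1/n})$ via $\sum_i b^{ii}\ge n\cK^{1/n}$. Without $\varepsilon_0$ you get only $\cK_\tau/\cK \le nQ$, which is not enough to close the estimate — so the claim that the negative higher-power term ``comes from the concavity of $\det^{1/n}$'' is not where it actually comes from; it comes from the shift. Your fallback remark about adding a barrier term ``such as a multiple of $h$'' is exactly the right instinct and, if you pursue it, leads to the paper's $Q$. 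So: right structure, right identification of the boundary issue, but the unmodified $\Phi$ would stall at step (3), and the ``fallback'' is in fact the main tool.
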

\begin{proof}
We apply the maximum principle to the function
\eq{\label{AF1}
Q = \frac{ \frac{(n+1)V(\widehat{\Sigma_{0}})}{\int_{\mathcal{C}_\theta} f h^{p} \, d\xi}f h^{p} \cK - h}{h - \varepsilon_0} = \frac{-h_{\tau}}{h - \varepsilon_0},
}
where $h_{\tau} :=\partial_{\tau} h$ and
\eq{\label{AF2}
\varepsilon_0 := \frac{1}{2} \min_{\mathcal{C}_\theta \times [0,\infty)} h(\xi,\tau) > 0.
}
We may assume that $\max \cK \gg 1$. Due to the $C^0$ estimate (cf.  \autoref{s3:C0-estimate}), we may also assume that $\max Q\approx\max \cK  \gg 1$.

For any fixed $\tau \in (0,\infty)$, suppose the maximum of $Q(\xi,\tau)$ is attained at some point $\xi_0 \in \mathcal{C}_\theta$. On $\partial \mathcal{C}_\theta$, we have
\eq{
\nabla_\mu Q &= -\frac{h_{\tau\mu}}{h - \varepsilon_0} + \frac{h_{\tau} h_{\mu}}{(h - \varepsilon_0)^2} = \frac{\varepsilon_0 \cot\theta\, h_{\tau}}{(h - \varepsilon_0)^2} = -\frac{\varepsilon_0 \cot\theta\, Q}{h - \varepsilon_0} < 0.
}
Therefore, the maximum of $Q(\cdot,\tau)$ is attained at some point $\xi_0 \in \mathcal{C}_\theta \setminus \partial \mathcal{C}_\theta$. 

Choose a local orthonormal frame $\{e_i\}_{i=1}^n$ such that $\{b_{ij}\}$ is diagonal at the point $\xi_0$. Then, at the point $\xi_0$, we have
\eq{\label{Up1}
0 = \nabla_i Q &= \frac{-h_{\tau i}}{h - \varepsilon_0} + \frac{h_{\tau} h_i}{(h - \varepsilon_0)^2},\\
0 \geq \nabla^2_{ii} Q &= \frac{-h_{\tau ii}}{h - \varepsilon_0} + \frac{2 h_{\tau i} h_i + h_{\tau} h_{ii}}{(h - \varepsilon_0)^2} - \frac{2 h_{\tau} h_i^2}{(h - \varepsilon_0)^3} \\
&= \frac{-h_{\tau ii}}{h - \varepsilon_0} + \frac{h_{\tau} h_{ii}}{(h - \varepsilon_0)^2}.
}
This implies that
\eq{
\label{Up3}
-h_{\tau ii} - h_{\tau} &\leq -\frac{h_{\tau} h_{ii}}{h - \varepsilon_0} - h_{\tau} = \frac{-h_{\tau}}{h - \varepsilon_0} [h_{ii} + (h - \varepsilon_0)] = Q (b_{ii} - \varepsilon_0).
}
Moreover, we have
\eq{
\label{Up4*}
\partial_{\tau} Q &= \frac{-h_{\tau\tau}}{h - \varepsilon_0} + \frac{h_{\tau}^2}{(h - \varepsilon_0)^2} \\
&= \frac{(n+1)V(\widehat{\Sigma_{0}}) f h^{p} \cK}{(h - \varepsilon_0) \int_{\mathcal{C}_\theta} f h^p \, d\xi} \left( \frac{\cK_\tau}{\cK} - \frac{p \int_{\mathcal{C}_\theta} f h^{p-1} h_{\tau} \, d\xi}{\int_{\mathcal{C}_\theta} f h^{p} \, d\xi} + p \frac{h_{\tau}}{h} \right) + Q + Q^2.
}
For simplicity, let us put 
\eq{
\sigma_n = \det(\nabla^2 h + h I), \quad \sigma_n^{ij} = \frac{\partial \det(\nabla^2 h + h I)}{\partial b_{ij}}.
}
Using \eqref{Up3}, at the point $\xi_0$, we calculate that
\eq{
\label{Up5}
\cK_{\tau} = \partial_{\tau} \sigma_n^{-1}
&= -\sigma_n^{-2} \sum_{i,j} \sigma_n^{ij} (h_{\tau ij} + h_{\tau} \delta_{ij}) \\
&\leq \sigma_n^{-2} \sum_i \sigma_n^{ii} Q (b_{ii} - \varepsilon_0) \\
&= \cK Q (n - \varepsilon_0 \sum_i b^{ii}),
}
where 
$\{b^{ij}\}$ denotes the inverse matrix of $\{b_{ij}\}$. Here, we used $\sigma_n^{ii} = \sigma_n b^{ii}$ and 
\eq{
\sum_i \sigma_n^{ii} b_{ii} &= n \sigma_n, \quad
\sum_i \sigma_n^{ii} &= \sigma_n \sum_i b^{ii}.
}
It follows from \eqref{Up5} and 
$\sum_i b^{ii} \geq n (\prod_i b^{ii})^{\frac{1}{n}} = n \cK^{\frac{1}{n}}$ that 
\eq{
\label{Up66}
\frac{\cK_{\tau}}{\cK} \leq n Q (1 - \varepsilon_0 \cK^{\frac{1}{n}}).
}
Substituting \eqref{Up66} and \eqref{AF1} into \eqref{Up4*}, we obtain
\begin{align*}
\partial_{\tau} Q &\leq \frac{(n+1)V(\widehat{\Sigma_{0}}) f h^p \cK}{(h - \varepsilon_0) \int_{\mathcal{C}_\theta} f h^p \, d\xi} \Big( n Q (1 - \varepsilon_0 \cK^{\frac{1}{n}}) \\
&\quad + \frac{p \int_{\mathcal{C}_\theta} f h^{p-1} (h - \varepsilon_0) Q \, d\xi}{\int_{\mathcal{C}_\theta} f h^{p} \, d\xi} - \frac{p (h - \varepsilon_0) Q}{h} \Big)  + Q + Q^2 \\
&= \big( Q + \frac{h}{h - \varepsilon_0} \big) \Big( n Q (1 - \varepsilon_0 \cK^{\frac{1}{n}}) \\
&\quad + \frac{p \int_{\mathcal{C}_\theta} f h^{p-1} (h - \varepsilon_0) Q \, d\xi}{\int_{\mathcal{C}_\theta} f h^{p} \, d\xi} - \frac{p (h - \varepsilon_0) Q}{h} \Big)  + Q + Q^2.
\end{align*}
Hence, we can find $C_1$ independent of $\tau$ such that  
\eq{
Q(\cdot,\tau) \leq C_1.
}
Therefore,  
$
\cK(\cdot,\tau) \leq C_2
$
for some $C_2$ depending only on $n, p$, $\theta, \min_{\mathcal{C}_\theta}f, \max_{\mathcal{C}_\theta}f$ and $\Sigma_0$.
\end{proof}

We now show that the Gauss curvature $\cK(\cdot,\tau)$ is uniformly bounded below away from zero.

\begin{lemma}\label{lemma-Gauss-curv-lower}
Let $p > -n-1$ and $\theta \in (0,\frac{\pi}{2})$. Suppose $\Sigma_{\tau}$ is a smooth, even, and strictly convex solution to the flow \eqref{GX}. Then there exists a positive constant $c$, depending only on $n, p$, $\theta$, $\min_{\mathcal{C}_\theta}f, \max_{\mathcal{C}_\theta}f$ and $\Sigma_0$, such that
\[
\cK \geq c.
\]
\end{lemma}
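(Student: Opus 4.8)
The plan is to run a maximum-principle argument on an auxiliary quantity that detects the infimum of the Gauss curvature, mirroring the structure of the proof of \autoref{lemma-Gauss-curv} but now bounding a reciprocal quantity from above. Concretely, I would consider
\eq{
\widetilde{Q} = \frac{-h_\tau}{\varepsilon_1 - h},
}
or a comparable ratio, where $\varepsilon_1 > 2\max_{\mathcal{C}_\theta\times[0,\infty)} h$ is chosen (using the now-available $C^0$ bound from \autoref{s3:C0-estimate}) so that $\varepsilon_1 - h$ is uniformly positive. The point of the sign flip relative to $Q$ is that a large value of $\widetilde{Q}$ now forces $h_\tau$ to be large and positive, which via the flow equation \eqref{Gh} forces $fh^p\cK$ to be small, i.e. $\cK$ small; so a uniform upper bound on $\widetilde{Q}$ is exactly a uniform lower bound on $\cK$. (One should double-check the correct sign; if this combination does not close, the alternative is to bound directly the quantity $-h_\tau/h = Q\cdot\tfrac{h-\varepsilon_0}{h}$ shifted the other way, or to estimate $\log\cK$ via its own evolution.)

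The key steps, in order, are: (i) derive the boundary gradient of $\widetilde{Q}$ on $\partial\mathcal{C}_\theta$ using $\nabla_\mu h = \cot\theta\, h$ and $\nabla_\mu h_\tau = \cot\theta\, h_\tau$, exactly as in the proof of \autoref{lemma-Gauss-curv}, to see that $\nabla_\mu\widetilde{Q}$ has a favourable sign and hence the maximum of $\widetilde{Q}(\cdot,\tau)$ is attained in the interior; (ii) at an interior spatial maximum, with $\{b_{ij}\}$ diagonalized, use $\nabla\widetilde{Q}=0$ and $\nabla^2\widetilde{Q}\le 0$ to convert the second-order information into the pointwise inequality $-h_{\tau ii} - h_\tau \ge \widetilde{Q}(b_{ii} - (\text{const}))$ or its analogue; (iii) compute $\partial_\tau\widetilde{Q}$ along \eqref{Gh}, using $\cK_\tau = -\sigma_n^{-2}\sum \sigma_n^{ij}(h_{\tau ij}+h_\tau\delta_{ij})$ together with $\sigma_n^{ii}=\sigma_n b^{ii}$, $\sum_i\sigma_n^{ii}b_{ii}=n\sigma_n$, and the global (but $\tau$-independent) terms coming from differentiating $\int_{\mathcal{C}_\theta} fh^p\,d\xi$; (iv) feed step (ii) into step (iii) to obtain, at a point where $\widetilde{Q}$ is large, a differential inequality of the form $\partial_\tau\widetilde{Q} \le -c\,\widetilde{Q}^{1+1/n} + C\widetilde{Q}^2 + \cdots$ whose right-hand side is negative once $\widetilde{Q}$ exceeds a threshold depending only on $n,p,\theta,\min f,\max f,\Sigma_0$; (v) conclude by the parabolic maximum principle that $\max_{\mathcal{C}_\theta}\widetilde{Q}(\cdot,\tau)$ is bounded for all $\tau$, hence $\cK$ is bounded below by a positive constant of the stated dependence.

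The main obstacle I anticipate is getting the exponents and signs to cooperate in step (iv): the natural "good" term produced by the Gauss-curvature evolution is of order $\cK^{1/n}$ times $\widetilde{Q}$, and when translated back into powers of $\widetilde{Q}$ this must dominate the quadratic term $\widetilde{Q}^2$ and the lower-order contributions from the $h^p$ and $\int fh^p$ factors. Since $\cK$ small corresponds to $\widetilde{Q}$ large but also to $\sum_i b^{ii}$ large, one has to be careful that the term $\varepsilon_1 \cK^{1/n}$ (or $n\sum_i b^{ii}$) really enters with the right sign after the substitution; a clean way is to use $\sum_i b^{ii}\ge n\cK^{1/n}$ only where it helps and otherwise absorb the sign. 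The other, more routine, nuisances are the $p$-dependent global integral terms in $\partial_\tau\widetilde{Q}$ — these are uniformly controlled by the $C^0$ estimate and the upper Gauss-curvature bound from \autoref{lemma-Gauss-curv}, so they contribute only lower-order corrections — and verifying that the contact-angle condition is preserved under differentiation in time, which is immediate since $h_\tau$ is again a capillary function.
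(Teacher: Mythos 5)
Your primary auxiliary function cannot detect the degeneracy $\cK\to 0$, so the argument cannot close. Writing $\alpha = \frac{(n+1)V(\widehat{\Sigma_0})}{\int_{\cC_\theta}fh^p\,d\xi}$, the flow \eqref{Gh} gives $-h_\tau = \alpha fh^p\cK - h$; by the $C^0$ bound of \autoref{s3:C0-estimate}, as $\cK\to 0$ this tends to $-h$, a uniformly bounded quantity, so $\widetilde Q = \frac{-h_\tau}{\varepsilon_1 - h}$ remains in a fixed compact interval regardless of how small $\cK$ becomes, and an a priori bound on $\widetilde Q$ (from above or below) says nothing about $\inf\cK$. Your heuristic also has the sign reversed: with $\varepsilon_1 - h > 0$, a large positive $\widetilde Q$ forces $-h_\tau$, hence $\alpha fh^p\cK$, to be large, i.e.\ $\cK$ \emph{large} --- the regime already covered by \autoref{lemma-Gauss-curv}, not the one at issue here. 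The asymmetry between the two lemmas is structural: $-h_\tau = \alpha fh^p\cK - h$ is unbounded above (as $\cK\to\infty$) but bounded below, so ratios of $h_\tau$ against a shift of $h$ can see $\cK\to\infty$ but are blind to $\cK\to 0$; the same objection applies to your suggested variant $-h_\tau/h$ ``shifted the other way.''

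Your fallback of ``estimating $\log\cK$ via its own evolution'' is the right fix and is essentially what the paper does: it runs the maximum principle on $Q = \log(f^{-1}\cK^{-1}) - N\log h$, which genuinely diverges to $+\infty$ as $\cK\to 0$. The tunable exponent $N$ is essential on both fronts. On $\partial\cC_\theta$, differentiating the Robin condition along the flow gives \eqref{IJ}, $\cK_\mu/\cK = -f_\mu/f + (1-p)\cot\theta$, hence $\nabla_\mu Q = (p-1-N)\cot\theta < 0$ once $N > p-1$, forcing the spatial maximum of $Q$ into the interior; by contrast, for your $\widetilde Q$ one finds $\nabla_\mu\widetilde Q = \frac{\varepsilon_1\cot\theta}{\varepsilon_1-h}\,\widetilde Q$, which has the \emph{same} sign as $\widetilde Q$ and so fails to rule out a boundary maximum when $\widetilde Q > 0$. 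At an interior maximum, substituting $\nabla Q = 0$ and $\nabla^2 Q \le 0$ into the evolution of $\log\sigma_n$ produces a bound of the form $\partial_\tau Q \le (n-N) + C\cK + T_1 + T_2$, where $C$ depends on the $C^0$ data and $N$, and $T_1, T_2$ carry coefficients $-(N-p+1)$ and $-N^2 + 2pN - N - p(p-1)$ that are nonpositive for $N$ large depending only on $n,p$. Taking $N > \max\{n, p-1\}$ large then forces $\partial_\tau Q < 0$ whenever $\cK$ is small, which bounds $Q$ and hence $\cK^{-1}$ from above with the stated dependence.
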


\begin{proof}
We apply the maximum principle to the auxiliary function
\eq{
Q = \log (f^{-1} \cK^{-1}) - N \log h,
}
where $N > 0$ is a constant to be determined later. We may assume that $\min \cK \ll 1$. Due to the $C^0$ estimate of $h$ (cf. \autoref{s3:C0-estimate}), we may also assume that $\max Q \gg 1$.

\begin{claim}
For $\tau > 0$, we have $\nabla_\mu Q < 0$ on $\partial \mathcal{C}_\theta$, provided $N > p-1$.
\end{claim}

For simplicity, set $\al := \frac{(n+1)V(\widehat{\Sigma_{0}})}{\int_{\mathcal{C}_\theta} f h^{p} \, d\xi}$. For $\tau>0$, we calculate
\eq{
\label{htu2}
h_{\tau\mu} &= \nabla_{\mu} (-\al f h^{p} \cK + h) \\
&= -\al f_{\mu} \cK h^{p} - \al f \cK_{\mu} h^{p} - p \al f \cK h^{p} \cot\theta + \cot\theta\, h,
}
and
\eq{\label{htu3}
h_{\tau\mu} = \cot \theta\, h_{\tau} = \cot \theta \, (-\al f \cK h^{p} + h).
}
From \eqref{htu2} and \eqref{htu3}, for $\tau > 0$, we have
\eq{\label{IJ}
\frac{\cK_{\mu}}{\cK} = -\frac{f_{\mu}}{f} + (1-p) \cot \theta,
}
and hence, if $N > p-1$, then
\eq{
\nabla_\mu Q &= -\frac{f_{\mu}}{f} - \frac{\cK_\mu}{\cK} - N \frac{h_\mu}{h} = (p-1) \cot \theta - N \cot \theta < 0.
}

Therefore, when $N>p-1$, for any fixed $\tau \in (0,\infty)$, the maximum of $Q(\cdot,\tau)$ is attained at some point $\xi_0 \in \mathcal{C}_\theta \setminus \partial \mathcal{C}_\theta$. Now, take a local orthonormal frame $\{e_i\}_{i=1}^n$ around $\xi_0$ such that $\{b_{ij}\}(\xi_0,\tau)$ is diagonal. At the point $\xi_0$,
\eq{\label{Ps3:C0-estimate}
\nabla_i Q = -\frac{\nabla_i (f \cK)}{f \cK} - N \frac{\nabla_i h}{h} = 0,
}
and
\eq{
\label{KIP}
\nabla^2_{ii} Q &= -\frac{\nabla^2_{ii} (f \cK)}{f \cK} + \frac{(\nabla_i (f \cK))^2}{(f \cK)^2} - N \frac{\nabla^2_{ii} h}{h} + N \frac{(\nabla_i h)^2}{h^2} \leq 0.
}
Substituting \eqref{Ps3:C0-estimate} into \eqref{KIP}, we obtain
\eq{\label{PC3}
-\nabla^2_{ii} (f \cK) \leq N f \cK \frac{\nabla^2_{ii} h}{h} + (-N - N^2) f \cK \frac{(\nabla_i h)^2}{h^2}.
}
In addition, we have
\eq{
\label{PCC}
\partial_{\tau} Q &= \frac{\partial_{\tau} \sigma_n}{\sigma_n} - N \frac{h_{\tau}}{h} = \sum_i b^{ii} (h_{\tau ii} + h_{\tau}) - N \frac{h_{\tau}}{h},
}
and
\eq{
\label{HTT}
h_{\tau ii} &= \nabla^2_{ii} (-\al f h^{p} \cK + h) \\
&= -\al h^{p} \nabla^2_{ii} (f \cK) - 2 p \al h^{p-1} \nabla_i h \nabla_i (f \cK) \\
&\quad - p (p-1) \al h^{p-2} f \cK (\nabla_i h)^2 + (1 - p \al h^{p-1} f \cK) \nabla^2_{ii} h.
}
In view of \eqref{Ps3:C0-estimate} and \eqref{PC3}, we find
\eq{\label{LP}
h_{\tau ii} \leq &~(1 +\al (N - p)  h^{p-1} f \cK)  \nabla^2_{ii} h \\
&~+ (-N - N^2 + 2 p N - p (p-1)) \al h^{p-2} f \cK (\nabla_i h)^2  \\
=&~ (1 + (N - p) \al h^{p-1} f \cK) (b_{ii} - h) \\
&~ + (-N - N^2 + 2 p N - p (p-1)) \al h^{p-2} f \cK (\nabla_i h)^2.
}
Substituting \eqref{LP} into \eqref{PCC}, we deduce
\eq{
\partial_{\tau} Q &\leq (1 + \al(N - p)  h^{p-1} f \cK) \big( n - h \sum_i b^{ii} \big) \\
&\quad + (-N - N^2 + 2 p N - p (p-1)) \al h^{p-2} f \cK \sum_i b^{ii} (\nabla_i h)^2 \\
&\quad + (-\al h^p f \cK  + h) \sum_i b^{ii} - N (1 - \al h^{p-1} f \cK ) \\
&= \left( n - N + (n (N - p) + N) \al h^{p-1} f \cK \right) - (N - p + 1) \al h^p f \cK \sum_i b^{ii} \\
&\quad + (-N - N^2 + 2 p N - p (p-1)) \al h^{p-2} f \cK \sum_i b^{ii} (\nabla_i h)^2.
}
Due to the $C^0$ estimate (cf. \autoref{s3:C0-estimate}), if $\max Q \to \infty$, i.e., $\min \cK \to 0$, the right-hand side of the above inequality is strictly negative, provided $N$ is sufficiently large. Thus, the Gauss curvature $\cK$ is uniformly bounded below away from zero.
\end{proof}

Finally, we prove that the principal curvatures of $\Sigma_{\tau}$ are uniformly bounded by positive constants from both above and below.

\begin{lemma}\label{s4:lem-C2*}
Let $p > -n-1$ and $\theta \in (0, \frac{\pi}{2})$. Suppose $\Sigma_{\tau}$ is a smooth, even, and strictly convex solution to the flow \eqref{GX}. Then there exists a positive constant $C$, depending only on $n$, $p$, $\theta$, $\min_{\mathcal{C}_\theta} f$, $\|f\|_{C^2(\mathcal{C}_\theta)}$ and $\Sigma_0$, such that the principal curvatures $\kappa_1, \ldots, \kappa_n$ of $\Sigma_{\tau}$ satisfy
\eq{\label{PUL}
\frac{1}{C} \leq \kappa_i \leq C.
}
\end{lemma}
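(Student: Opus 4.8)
The estimate \eqref{PUL} is equivalent to a one-sided bound: the matrix $\{b_{ij}\}=\{\nabla^2_{ij}h+h\,\delta_{ij}\}$ has the principal radii of curvature $\rho_i=\kappa_i^{-1}$ as eigenvalues and $\det\{b_{ij}\}=\cK^{-1}$, which by \autoref{lemma-Gauss-curv} and \autoref{lemma-Gauss-curv-lower} lies between two positive constants. Hence, once a uniform upper bound $\rho_{\max}:=\max_i\rho_i\le C$ is in hand, one gets $\rho_{\min}\ge C^{-(n-1)}\det\{b_{ij}\}\ge c>0$, so $c\le\rho_i\le C$ for all $i$, which is \eqref{PUL}. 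The plan is therefore to bound $\rho_{\max}$ from above, by running the parabolic maximum principle on
\[
W(\xi,\tau):=\log\rho_{\max}\big(\{b_{ij}\}(\xi,\tau)\big)-A\log h(\xi,\tau),\qquad A>0\ \text{large},
\]
where at a maximum point $(\xi_0,\tau_0)$ one replaces $\rho_{\max}$ by $b_{\zeta_0\zeta_0}$, with $\zeta_0$ the eigendirection realizing $\rho_{\max}(\xi_0,\tau_0)$ parallel-transported in space; this is smooth and lies below $W$ with equality at $(\xi_0,\tau_0)$. Writing the speed of \eqref{Gh} as $F:=\partial_\tau h=-\alpha\,(f\circ\tilde\nu)\,h^p\cK+h$, with $\alpha:=(n+1)V(\widehat{\Sigma_0})/\int_{\cC_\theta}fh^p\,d\xi$ a spatial constant, one has $\partial_\tau b_{ij}=\nabla^2_{ij}F+F\,\delta_{ij}$, and since $\partial F/\partial b_{ij}=\alpha fh^p\cK\,b^{ij}$, the $C^0$ bound (\autoref{s3:C0-estimate}) together with $c\le\cK\le C$ makes the linearized operator $\mathcal L:=(\partial F/\partial b_{ij})\nabla^2_{ij}$ uniformly elliptic with coefficients comparable to $b^{ij}$. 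Fix $T'<\infty$ and take $(\xi_0,\tau_0)$ realizing $\max_{\cC_\theta\times[0,T']}W$ with $\tau_0>0$ (the case $\tau_0=0$ is controlled by $\Sigma_0$).

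\emph{Boundary step.} Suppose $\xi_0\in\partial\cC_\theta$, and work in a frame adapted to $\partial\cC_\theta$ with $e_n=\mu$. Using that $\partial\cC_\theta$ is totally umbilic in the spherical cap $\cC_\theta$ with constant principal curvature $\cot\theta$, and differentiating the Robin condition $\nabla_\mu h=\cot\theta\,h$ along and across $\partial\cC_\theta$, one obtains on $\partial\cC_\theta$ — as in the boundary computations of \cite{MWW25a,MWWX25} (see also \cite{HIS25}) — that $b_{n\alpha}=0$ for $\alpha<n$, so that $\mu$ is a principal direction, together with formulas expressing $\nabla_\mu b_{nn}$ and $\nabla_\mu b_{\alpha\alpha}$ as lower-order terms plus $\cot\theta$ times $b_{nn}$, resp.\ $b_{\alpha\alpha}$. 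Combined with $\nabla_\mu h=\cot\theta\,h>0$ (here $\theta<\tfrac{\pi}{2}$ is used) and $A$ large, this forces $\nabla_\mu W(\xi_0,\tau_0)<0$, impossible at a boundary maximum. Hence $\xi_0\in\cC_\theta\setminus\partial\cC_\theta$.

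\emph{Interior step.} At the interior point, diagonalize $\{b_{ij}\}$ with $b_{11}=\rho_{\max}$ (the frame extended so as to be parallel at $\xi_0$) and assume $\rho_{\max}\gg1$. Then $\nabla W(\xi_0,\tau_0)=0$ gives $\nabla_i b_{11}=A\,b_{11}\,\nabla_i h/h$, so $|\nabla_i b_{11}|/b_{11}$ is bounded by \autoref{C1}, and $\mathcal L W(\xi_0,\tau_0)\le0$ yields $\mathcal L(\log b_{11})\le A\,\alpha fh^p\cK\,(n/h-\sum_i b^{ii})$. One expands $\partial_\tau W=(\nabla^2_{11}F+F)/b_{11}-AF/h$ by differentiating $\cK=1/\det\{b_{ij}\}$ twice. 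The key points are: (a) commuting covariant derivatives on $\cC_\theta$, which has constant curvature $1$, gives $\nabla^2_{11}b_{ii}-\nabla^2_{ii}b_{11}=b_{ii}-b_{11}$, so the leading term of $\partial_\tau W$ reorganizes as $\mathcal L(\log b_{11})+\alpha fh^p\cK\sum_i b^{ii}(\nabla_i b_{11})^2/b_{11}^2-\alpha fh^p\cK\sum_i b^{ii}$ plus bounded terms; (b) $F$ is \emph{concave} in $\{b_{ij}\}$, because $\{b_{ij}\}\mapsto\det\{b_{ij}\}^{-1}$ is convex on positive-definite matrices, so the Hessian term $(\partial^2 F/\partial b_{ij}\partial b_{kl})\,\nabla_1 b_{ij}\,\nabla_1 b_{kl}\le0$; since the Codazzi identity on the round cap makes $\nabla_k b_{ij}$ totally symmetric (in particular $\nabla_1 b_{1j}=\nabla_j b_{11}$), this nonpositive term absorbs the gradient term $\sum_i b^{ii}(\nabla_i b_{11})^2/b_{11}^2$; (c) since $\cK\ge c$, one has $\sum_i b^{ii}\ge(n-1)(c\,\rho_{\max})^{1/(n-1)}\to\infty$, so the two negative multiples of $\alpha fh^p\cK\sum_i b^{ii}$ (from (a) and from the bound on $\mathcal L(\log b_{11})$) dominate the remaining contributions — those carrying $\|f\|_{C^2(\cC_\theta)}$, whence that norm in the statement, and $\nabla f$, $\nabla h$, being lower order or absorbable via Young's inequality against the concavity term — once $A$ is large. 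Hence $\partial_\tau W(\xi_0,\tau_0)<0$, which contradicts $(\xi_0,\tau_0)$ being a space-time maximum with $\tau_0>0$; so $\rho_{\max}$, and therefore $W$, is bounded on $\cC_\theta\times[0,T']$ uniformly in $T'$. Letting $T'\to\infty$ and invoking the reduction yields \eqref{PUL}.

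\emph{Main difficulty.} As announced after \autoref{MTO}, the purpose of first securing $c\le\cK\le C$ is exactly to render the interior step routine: once the Gauss curvature is pinched, $\mathcal L$ is uniformly elliptic, every coefficient is controlled, and the only large quantities $\rho_{\max}$ and $\sum_i b^{ii}$ enter with the right signs thanks to the concavity of the speed in $\{b_{ij}\}$. The real obstacle is the boundary step, where one must extract from the Robin condition the precise normal derivatives of $b_{ij}$ along $\partial\cC_\theta$ and verify $\nabla_\mu W<0$ for every admissible maximizing direction, the subtle case being a maximizing direction tangent to $\partial\cC_\theta$.
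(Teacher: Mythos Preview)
Your approach is correct but differs from the paper's in both the choice of test function and the handling of the interior step. The paper applies the maximum principle to $Q=\sigma_1+\tfrac{A}{2}|\nabla h|^2$, where $\sigma_1=\operatorname{tr}\{b_{ij}\}$, rather than to $\log\rho_{\max}-A\log h$. On the boundary, the paper computes $\nabla_\mu Q$ using the same identities you invoke, but because the $|\nabla h|^2$ term contributes a \emph{positive} piece $A\cot\theta\,h\,b_{\mu\mu}$, the paper must absorb it via the good quadratic term $-\cot\theta\,b_{\mu\mu}^2\sum_\beta b^{\beta\beta}$ together with the Gauss curvature \emph{lower} bound (\autoref{lemma-Gauss-curv-lower}); your $-A\log h$ contributes $-A\cot\theta$ directly, so your boundary step is cleaner and, unlike the paper's, does not need the lower bound on $\cK$ at this stage. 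In the interior, the paper works with $\partial_\tau Q/(h-h_\tau)$ and obtains the dominating term $-(A-c_2)\sigma_1$ by a direct computation using \eqref{XxX6}--\eqref{XxX72} and the lower bound $h-h_\tau=\alpha f h^p\cK\ge c$; you instead exploit the concavity of $F$ in $\{b_{ij}\}$ (equivalently, the convexity of $B\mapsto\det(B)^{-1}$) to kill the third-order terms, and use $\cK\ge c$ to make $\sum_i b^{ii}\to\infty$. Both routes buy the same two-sided curvature pinching once $\cK$ is pinched; your argument is somewhat more conceptual, the paper's more explicit.

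Two points of imprecision in your write-up deserve mention. First, your description of $\nabla_\mu b_{nn}$ as ``lower-order terms plus $\cot\theta\,b_{nn}$'' is not accurate: the exact identity, obtained by combining $\nabla_\mu b_{\beta\beta}=\cot\theta(b_{\mu\mu}-b_{\beta\beta})$ with the flow-specific boundary relation $\cK_\mu/\cK=-f_\mu/f+(1-p)\cot\theta$ (this is \eqref{IJ} in the paper, \emph{not} a purely geometric fact from the cited references), reads
\[
\nabla_\mu b_{\mu\mu}=\Bigl(\tfrac{f_\mu}{f}+(n+p-2)\cot\theta\Bigr)b_{\mu\mu}-\cot\theta\,b_{\mu\mu}^2\sum_{\beta}b^{\beta\beta},
\]
so there is a \emph{higher}-order term, but with a favorable sign. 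Second, you label the tangential maximizing direction as the ``subtle case'', whereas in fact that case is immediate ($\nabla_\mu b_{\beta\beta}/b_{\beta\beta}=\cot\theta(b_{\mu\mu}/b_{\beta\beta}-1)\le 0$ when $b_{\beta\beta}=\rho_{\max}$); it is the case $\zeta_0=\mu$ that requires the flow equation via \eqref{IJ}. Neither imprecision invalidates the argument.
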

\begin{proof}
Since the Gauss curvature $\cK$ of $\Sigma_{\tau}$ is uniformly bounded from above (cf. \autoref{lemma-Gauss-curv}), it suffices to prove the lower bound of the principal curvatures of $\Sigma_{\tau}$, which is equivalent to the upper bound of the principal radii of curvature of $\Sigma_{\tau}$. To this end, we apply the maximum principle to the auxiliary function
\eq{
Q= \sigma_1+ \frac{A}{2} |\nabla h|^2,
}
where $A > 0$ is a constant to be specified later, and $\sigma_1 := \sigma_1(b_{ij}(\xi,\tau))$ is the trace of the matrix $\{b_{ij}\}$.

\begin{claim}
Fix $A > 0$ and suppose $\tau > 0$. Then $\nabla_{\mu} Q < 0$ at any point on $\partial \mathcal{C}_{\theta}$ where $Q$ is sufficiently large (depending on $A$).
\end{claim}

Choose a local orthonormal frame $\{e_i\}_{i=1}^n$ such that $e_n = \mu$ and $\{b_{ij}\}$ is diagonal at $\xi_0 \in \partial \mathcal{C}_\theta$. At the point $\xi_0$, we have
\eq{\label{nabla=E}
\nabla_\mu Q &= \sum_j \nabla_\mu b_{jj} + A \sum_j h_j h_{j\mu} \\
&= \cot \theta \sum_{\beta=1}^{n-1} (b_{\mu\mu} - b_{\beta\beta}) + \nabla_\mu b_{\mu\mu} + A \cot \theta\, h h_{\mu\mu},
}
where we used (due to the Gauss-Weingarten equation)
\eq{
h_{\mu\beta} = 0 \quad \text{and} \quad \nabla_{\mu} b_{\beta\beta} = \cot \theta\, (b_{\mu\mu} - b_{\beta\beta})
}
for all $1 \leq \beta \leq n-1$.
On the other hand, we have
\eq{
\frac{\cK_\mu}{\cK} &= -\sigma_n^{-1} \nabla_\mu \sigma_n \\
&= -\sigma_n^{-1} \left( \sum_{\beta} \sigma_n^{\beta\beta} \nabla_\mu b_{\beta\beta} + \sigma_n^{\mu\mu} \nabla_\mu b_{\mu\mu} \right) \\
&= -\left( \cot \theta \sum_{\beta} b^{\beta\beta} (b_{\mu\mu} - b_{\beta\beta}) + b^{\mu\mu} \nabla_\mu b_{\mu\mu} \right) \\
&= -\cot \theta\, b_{\mu\mu} \sum_{\beta} b^{\beta\beta} + (n-1) \cot \theta - b^{\mu\mu} \nabla_\mu b_{\mu\mu}.
}
Combining this with \eqref{IJ}, we obtain
\eq{\label{eq-nabla-b_nn}
\nabla_\mu b_{\mu\mu} = \left( \frac{f_\mu}{f} + (n+p-2) \cot \theta \right) b_{\mu\mu} - \cot \theta\, b_{\mu\mu}^2 \sum_{\beta} b^{\beta\beta}.
}
Substituting \eqref{eq-nabla-b_nn} into \eqref{nabla=E}, we find
\eq{
\nabla_\mu Q &= \cot \theta \left( n b_{\mu\mu} - \sigma_1 \right) + \left( \frac{f_\mu}{f} + (n+p-2) \cot \theta \right) b_{\mu\mu} \\
&\quad - \cot \theta\, b_{\mu\mu}^2 \sum_{\beta} b^{\beta\beta} + A \cot \theta\, h (b_{\mu\mu} - h) \\
&\leq -\cot \theta\, \sigma_1 + \left( \frac{f_\mu}{f} + (2n + p - 2 + A h) \cot \theta \right) b_{\mu\mu} \\
&\quad - (n-1) \cot \theta\, b_{\mu\mu}^{2 + \frac{1}{n-1}} \cK^{\frac{1}{n-1}},
}
where we used
\eq{
\sum_{\beta} b^{\beta\beta} \geq (n-1) \left( \prod_{\beta} b^{\beta\beta} \right)^{\frac{1}{n-1}} = (n-1) \cK^{\frac{1}{n-1}} b_{\mu\mu}^{\frac{1}{n-1}}.
}
In view of  \autoref{s3:C0-estimate} and the lower bound on the Gauss curvature (cf. \autoref{lemma-Gauss-curv-lower}), we have
\eq{
\nabla_\mu Q \leq \cot \theta \left( -\sigma_1 + c_1 b_{\mu\mu} - c_2 b_{\mu\mu}^{2 + \frac{1}{n-1}} \right)
}
for some positive constants $c_1 = c_1(n,p,\theta,\min_{\mathcal{C}_{\theta}}f,\|f\|_{C^1(\mathcal{C}_\theta)},\Sigma_0,A)$, and $c_2=c_2(n,p,\theta,\min_{\mathcal{C}_{\theta}}f,\max_{\mathcal{C}_{\theta}}f,\Sigma_0)$, independent of $\tau$.

If $Q \geq c_1 b_{\mu\mu}+\frac{A}{2}|\nabla h|^2$ (i.e. $\sigma_1 \geq c_1 b_{\mu\mu}$) at $\xi_0$, then it is clear that $\nabla_\mu Q < 0$. Otherwise, $Q \leq c_1 b_{\mu\mu}+\frac{A}{2}|\nabla h|^2$ (i.e. $\sigma_1 \leq c_1 b_{\mu\mu}$) at $\xi_0$, and hence
\eq{
\nabla_\mu Q &\leq \cot \theta\, b_{\mu\mu} \left( c_1 - c_2 b_{\mu\mu}^{\frac{n}{n-1}} \right) \\
&\leq \cot \theta\, b_{\mu\mu} \left( c_1 - c'_2 \sigma_1^{\frac{n}{n-1}} \right)\\
&= \cot \theta\, b_{\mu\mu} \left( c_1 - c'_2\left(Q-\frac{A}{2}|\nabla h|^2\right)^{\frac{n}{n-1}} \right),
}
which is negative, provided that $Q$ is sufficiently large.

Therefore, for any fixed $\tau \in (0,\infty)$ and any fixed $A$, the maximum of $Q$, if very large, must be attained at some point $\xi_0 \in \mathcal{C}_\theta \setminus \partial \mathcal{C}_\theta$. Choose a local orthonormal frame $\{e_i\}_{i=1}^n$ around $\xi_0$ such that $\{b_{ij}\}(\xi_0,\tau)$ is diagonal, and hence $\{h_{ij}\}(\xi_0,\tau)$ is also diagonal. At $\xi_0$, we calculate
\eq{
\label{XxX1}
0 = \nabla_i Q = \sum_j \nabla_i b_{jj} + A h_i h_{ii},
}
and
\eq{\label{XxX2}
0 \geq \nabla^2_{ii} Q = \sum_j \nabla^2_{ii} b_{jj} + A \left( \sum_j h_j h_{jii} + h_{ii}^2 \right),
}
where we stipulate $h_{jii}:=\nabla_ih_{ji}$.
Moreover, we have
\eq{
\label{XxX3}
\partial_{\tau} Q = \sum_j \partial_{\tau} b_{jj} + A \sum_j h_j h_{j\tau }.
}
On the other hand,
\eq{
\label{XxX4}
\log (h - h_{\tau}) &= \log \left( \al f \cK h^{p} \right) \\
&= -\log \det (\nabla^2 h + h I) + \psi,
}
where $\psi := \log \left(\al f h^p \right)$. Taking the covariant derivative of \eqref{XxX4} in the $e_j$ direction, it follows that
\eq{\label{XxX6}
\frac{h_j - h_{j\tau }}{h - h_{\tau}} &= -\sum_{i,k} b^{ik} \nabla_j b_{ik} + \psi_j \\
&= -\sum_i b^{ii} (h_{jii} + h_i \delta_{ij}) +  \psi_j,
}
where we used that $\nabla b$ is fully symmetric. We also have
\eq{
\label{XxX7}
\frac{h_{jj} - h_{jj\tau }}{h - h_{\tau}} - \frac{(h_j - h_{j\tau })^2}{(h - h_{\tau})^2} &= -\sum_i b^{ii} \nabla^2_{jj} b_{ii} + \sum_{i,k} b^{ii} b^{kk} (\nabla_j b_{ik})^2 +  \psi_{jj}.
}
Using the commutator formula on $\mathbb{S}^n$ we find
\eq{
\label{XxX72}
\frac{h_{jj} - h_{jj\tau }}{h - h_{\tau}} &= \frac{(h_j - h_{j\tau })^2}{(h - h_{\tau})^2} - \sum_i b^{ii} (\nabla^2_{ii} b_{jj} - b_{jj} + b_{ii}) \\
&\quad + \sum_{i,k} b^{ii} b^{kk} (\nabla_j b_{ik})^2 +  \psi_{jj} \\
&= \frac{(h_j - h_{j\tau })^2}{(h - h_{\tau})^2} - \sum_i b^{ii} \nabla^2_{ii} b_{jj} + b_{jj} \sum_i b^{ii} - n \\
&\quad + \sum_{i,k} b^{ii} b^{kk} (\nabla_j b_{ik})^2 +  \psi_{jj}.
}
Together with \eqref{XxX2}, \eqref{XxX3}, and \eqref{XxX72}, we calculate
\eq{
\label{XxX82}
\frac{\partial_{\tau} Q}{h - h_{\tau}} &= \sum_j \frac{(h_{jj\tau } - h_{jj}) + (h_{jj} + h) - h + h_{\tau}}{h - h_{\tau}} + A \frac{\sum_j h_j h_{j\tau }}{h - h_{\tau}} \\
&= -\sum_j \frac{(h_j - h_{j\tau })^2}{(h - h_{\tau})^2} + \sum_{i,j} b^{ii} \nabla^2_{ii} b_{jj} + n^2 - \sum_{i,j,k} b^{ii} b^{kk} (\nabla_j b_{ik})^2 \\
&\quad - \Delta \psi - \sum_j b_{jj} \sum_i b^{ii} + \frac{\sum_j b_{jj}}{h - h_{\tau}} - n + A \frac{\sum_j h_j h_{j\tau }}{h - h_{\tau}} \\
&\leq -A \sum_{i,j} b^{ii} h_j h_{jii} - A \sum_i b^{ii} h_{ii}^2 + n(n-1) - \Delta \psi \\
&\quad - \sigma_1 \sum_i b^{ii}+ \frac{\sigma_1}{h - h_{\tau}} + A \frac{\sum_j h_j h_{j\tau }}{h - h_{\tau}} .
}
Multiplying \eqref{XxX6} by $h_j$ and summing over $j$, we find
\eq{
\frac{|\nabla h|^2 - \sum_j  h_jh_{j\tau }}{h - h_{\tau}} = -\sum_{i,j} b^{ii} h_j h_{jii}  - \sum_i b^{ii} h_i^2 + \sum_j h_j \psi_j.
}
Substituting this into \eqref{XxX82} yields
\eq{
\label{Eht}
\frac{\partial_{\tau} Q}{h - h_{\tau}} &\leq n(n-1) - \Delta \psi - \sigma_1 \sum_i b^{ii} + \frac{\sigma_1}{h - h_{\tau}} \\
&\quad + A \left( \frac{|\nabla h|^2}{h - h_{\tau}} - \sum_j h_j \psi_j + \sum_i b^{ii} (h_i^2 - h_{ii}^2) \right) \\
&= n(n-1) - \Delta \psi  - \sigma_1 \sum_i b^{ii} + \frac{\sigma_1}{h - h_{\tau}}\\
&\quad + \frac{A |\nabla h|^2}{h - h_{\tau}} - A \sum_j h_j  \psi_j  + A \sum_i b^{ii} h_i^2 \\
&\quad - A \sigma_1 + 2 n A  h - A h^2 \sum_i b^{ii}.
}
We also have
\eq{
\psi_j = \frac{f_j}{f} + p \frac{h_j}{h}, \q
\psi_{jj} = \frac{f f_{jj} - f_j^2}{f^2} + p \frac{h h_{jj} - h_j^2}{h^2}.
}
Therefore, for some positive constants $c_0, c_1, c_2$, independent of $\tau$, depending only on $n$, $p$, $\theta$, $\min_{\mathcal{C}_\theta} f$, $\|f\|_{C^2(\mathcal{C}_\theta)}$ and $\Sigma_0$, we have
\eq{
\frac{\partial_{\tau} Q}{h - h_{\tau}} &\leq c_0 + c_1 A + (c_2 - A) \sigma_1 + (A |\nabla h|^2 - \sigma_1) \sum_i b^{ii}\\
&=c_0 + c_1 A + (c_2 - A) \left(Q-\frac{A}{2}|\nabla h|^2\right) + \left(\frac{3A}{2} |\nabla h|^2 - Q\right) \sum_i b^{ii},
}
where we used \autoref{s3:C0-estimate}, \autoref{C1}, and
\eq{
h - h_{\tau} = f \cK h^{p} \frac{(n+1)V(\widehat{\Sigma_{0}})}{\int_{\mathcal{C}_\theta} f h^{p} \, d\xi} \geq c
}
for some positive constant $c$, independent of $\tau$, due to \autoref{lemma-Gauss-curv-lower}. Now take $A = 2 c_2$. If $Q(\xi_0,\tau)$ is very large, then we have
$
\partial_{\tau} Q< 0.
$
\end{proof}

\subsection{Regularity Estimates II}\label{Sec4-2}
In this subsection, we prove the uniform $C^k$ estimates of solutions to the flow \eqref{GX-p>n+1} for $p > n+1$, without the evenness assumption.

\begin{lemma}\label{s3:C0-estimate-p>n+1}
Let $p > n+1$ and $\theta \in (0,\frac{\pi}{2})$. Suppose $\Sigma_{\tau}$ is a smooth, strictly convex solution to the flow \eqref{GX-p>n+1}. Then there exists a positive constant $C$, depending only on $n$, $p$, $\theta$, $\min_{\mathcal{C}_\theta} f$, $\max_{\mathcal{C}_\theta} f$ and $\Sigma_0$, such that
\eq{\label{s3:C0-estimate-bound-p>n+1}
\frac{1}{C} \leq h(\xi,\tau) \leq C, \quad \forall (\xi,\tau) \in \mathcal{C}_\theta \times (0,\infty).
}
\end{lemma}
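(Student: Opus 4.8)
The plan is to prove the two bounds separately. The upper bound will come from the monotonicity of $\widetilde{J}$ (\autoref{monJ-noneven}) together with two elementary geometric facts, and the lower bound from a maximum-principle/ODE-comparison argument in which the hypothesis $p>n+1$ plays the decisive role; the lower bound does not use the upper bound.

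\textbf{Upper bound.} Set $M(\tau):=\max_{\mathcal{C}_\theta}h(\cdot,\tau)$. Since $\widetilde{J}(\tau)=-V(\widehat{\Sigma_\tau})+\tfrac1p\int_{\mathcal{C}_\theta}fh^p\,d\xi$ is non-increasing,
\[
\tfrac1p\int_{\mathcal{C}_\theta}fh^p\,d\xi\ \le\ \widetilde{J}(0)+V(\widehat{\Sigma_\tau})\qquad\text{for all }\tau\ge0,
\]
and I would bound the two sides by powers of $M(\tau)$. Every $x\in\widehat{\Sigma_\tau}$ satisfies $x\cdot\nu\le h(\tilde\nu)\le M(\tau)$ for all outer unit normals $\nu$ of $\Sigma_\tau$ (which fill out $\mathbb{S}_\theta$) and also $x\cdot E_{n+1}\ge0$; a short convexity computation using $\theta<\tfrac\pi2$ shows these constraints force $\widehat{\Sigma_\tau}$ into a ball of radius $C(\theta)M(\tau)$, hence $V(\widehat{\Sigma_\tau})\le C(n,\theta)M(\tau)^{n+1}$ (equivalently one runs the argument with the scaling factor $R_\tau$ of the smallest enclosing cap, exactly as in Case~3 of \autoref{s3:C0-estimate}). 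Conversely, if $h(\xi_0,\tau)=M(\tau)$ and $x_0:=\tilde\nu^{-1}(\xi_0)\in\Sigma_\tau$, then $|x_0|\ge M(\tau)$ and $h(\xi)\ge x_0\cdot(\xi-\cos\theta\,e)$ for all $\xi\in\mathcal{C}_\theta$; integrating and using that the map $u\mapsto\int_{\mathbb{S}_\theta}(u\cdot\omega)_+^p\,d\omega$ is continuous and strictly positive on the closed upper half-sphere (again by $\theta<\tfrac\pi2$), I obtain $\int_{\mathcal{C}_\theta}fh^p\,d\xi\ge c(n,p,\theta)\,(\min_{\mathcal{C}_\theta}f)\,M(\tau)^p$. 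The three displays give $c(n,p,\theta)(\min f)M(\tau)^p\le p\widetilde{J}(0)+C(n,\theta)M(\tau)^{n+1}$, and since $p>n+1$ the left side eventually dominates, forcing $M(\tau)\le C$ with $C$ depending only on $n,p,\theta,\min f,\max f,\Sigma_0$.

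\textbf{Lower bound.} For $\tau>0$ the spatial minimum of $h(\cdot,\tau)$ cannot lie on $\partial\mathcal{C}_\theta$: there $\nabla_\mu h=\cot\theta\,h>0$, whereas at a boundary minimum the outward normal derivative must be $\le0$. At an interior minimum $\nabla^2h\ge0$, so $b_{ij}\ge h\,\delta_{ij}$, hence $\cK=\det(\nabla^2h+hI)^{-1}\le h^{-n}$. Since $h$ solves $\partial_\tau h=-fh^p\cK+h$, it follows that $\psi(\tau):=\min_{\mathcal{C}_\theta}h(\cdot,\tau)$ obeys
\[
\psi'(\tau)\ \ge\ \psi(\tau)-(\max_{\mathcal{C}_\theta}f)\,\psi(\tau)^{p-n}\ =\ \psi(\tau)\bigl(1-(\max_{\mathcal{C}_\theta}f)\,\psi(\tau)^{p-n-1}\bigr)
\]
for a.e. $\tau>0$ (Hamilton's trick). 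As $p-n-1>0$, the right side is positive whenever $0<\psi<(\max f)^{-1/(p-n-1)}$, so comparison with the corresponding ODE gives $\psi(\tau)\ge\min\bigl(\min_{\mathcal{C}_\theta}h_0,(\max_{\mathcal{C}_\theta}f)^{-1/(p-n-1)}\bigr)>0$ for all $\tau$. Equivalently, one may apply the maximum principle to $Q=-\log h$ (noting $\nabla_\mu Q=-\cot\theta<0$ on the boundary and $\partial_\tau Q=fh^{p-1}\cK-1\le\max f\cdot h^{p-n-1}-1$ at an interior maximum of $Q$), in the style of the curvature estimates above.

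\textbf{Main obstacle.} There is no serious difficulty, but the upper bound is the only step needing real care: the single scalar inequality coming from $\widetilde{J}$ must be upgraded to an $L^\infty$ bound on $h$, and this is exactly where one needs the two \emph{uniform} geometric estimates — that $V(\widehat{\Sigma_\tau})$ is at most a constant times $(\max h)^{n+1}$, and that $\int fh^p$ is at least a constant times $(\max h)^p$ — both of which rely on $\theta<\tfrac\pi2$. It is only after these are in place that the super-criticality $p>n+1$ closes the estimate, just as, in the lower bound, $p>n+1$ renders the term $-\max f\,h^{p-n}$ harmless near $h=0$.
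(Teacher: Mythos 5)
Your proposal is correct, and both halves would go through, but it is a genuinely different route from the paper's. The paper proves \emph{both} bounds by a single maximum-principle/ODE-comparison argument applied to the ratio $h/\ell$, where $\ell$ is the capillary support function of $\cC_\theta$: this ratio satisfies the pure Neumann boundary condition $\nabla_\mu(h/\ell)=0$, so extrema (whether interior or on $\partial\cC_\theta$) automatically come with $\nabla(h/\ell)=0$ and a signed Hessian. At an extremum one gets $b_{ij}\gtrless(h/\ell)\delta_{ij}$, hence $\cK\lessgtr(h/\ell)^{-n}$, and since $\partial_\tau(h/\ell)=(h/\ell)(1-fh^{p-1}\cK^{-1}\cdot\cK)$ the condition $p>n+1$ closes the ODE comparison on both sides. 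Your lower bound is the same idea carried out for $h$ instead of $h/\ell$; your step ruling out a boundary minimum via $\nabla_\mu h=\cot\theta\,h>0$ plays the role that the Neumann condition on $h/\ell$ plays in the paper, so there is no gap, merely a cosmetic difference. The real divergence is your upper bound: you use the monotone functional $\widetilde J$ together with the two geometric inequalities $V(\widehat{\Sigma_\tau})\le C(n,\theta)\,(\max h)^{n+1}$ and $\int_{\cC_\theta}fh^p\,d\xi\ge c(n,p,\theta)(\min f)(\max h)^p$, and then let $p>n+1$ win; the device of replacing $|p_\tau\cdot\omega|$ by $(p_\tau\cdot\omega)_+$ neatly removes the evenness assumption that the analogous argument (Case~3 of \autoref{s3:C0-estimate}) relies on. This buys a closer structural parallel with the $-n-1<p\le n+1$ regime, at the cost of invoking $\widetilde J$ and the convexity geometry; the paper's $h/\ell$ argument is more self-contained and handles both bounds in one stroke, but is special to $p>n+1$.
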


\begin{proof}
A direct calculation yields
\eq{\label{s5:evol-u}
\partial_{\tau}\left( \frac{h}{\ell} \right) &= \frac{h}{\ell} \left( -h^{p-1} f \sigma_n^{-1} + 1 \right),
}
and
\eq{\label{s5:bdry-u}
\nabla_\mu \left( \frac{h}{\ell} \right) = 0, \quad \text{on } \partial \mathcal{C}_\theta,
}
where we used that $\nabla_\mu \ell = \cot \theta\, \ell$ on $\partial \mathcal{C}_\theta$.

We claim that for any fixed $\tau > 0$, if the function $\frac{h}{\ell}(\cdot,\tau)$ attains its minimum at a point $\xi_0 \in \mathcal{C}_\theta$, then at this point,
\eq{
\nabla \left( \frac{h}{\ell} \right) = 0, \quad \nabla^2 \left( \frac{h}{\ell} \right) \geq 0.
}
If $\xi_0 \in \mathcal{C}_\theta \setminus \partial \mathcal{C}_\theta$, then the claim follows directly. We assume that $\xi_0 \in \partial \mathcal{C}_\theta$. For any $v \in T_{\xi_0} \partial \mathcal{C}_\theta$, we have
\eq{
0 = \nabla_v  \frac{h}{\ell} .
}
Combining this with \eqref{s5:bdry-u}, we have $\nabla \frac{h}{\ell} = 0$ at $\xi_0$.

Choose a local orthonormal frame $\{e_i\}_{i=1}^n$ around $\xi_0$ such that $e_n = \mu$ and the matrix $b_{ij}|_{(\xi_0,\tau)}$ is diagonal. In this frame, the Hessian $\nabla^2_{ij} \left( \frac{h}{\ell} \right)(\xi_0,\tau)$ is also diagonal. From the Taylor expansion of $\left( \frac{h}{\ell} \right)(\gamma(s),\tau)$, it follows that
\[
\left. \frac{d^2}{ds^2} \right|_{s=0} \left( \frac{h}{\ell} \right)(\gamma(s),\tau) \geq 0 \implies \nabla^2_{\mu\mu} \left( \frac{h}{\ell} \right)(\xi_0,\tau) \geq 0,
\]
where $\gamma$ is the geodesic with $\gamma(0) = \xi_0$ and $\gamma'(0) = -\mu$.

Next, we show that at $\xi_0$, $b_{ij} \geq \frac{h}{\ell} \delta_{ij}$. To this end, note that at the point $\xi_0$, we have
\begin{align}
0 \leq \nabla^2_{ij} \frac{h}{\ell}  &= \frac{\nabla^2_{ij} h}{\ell} + \frac{2h}{\ell^3} \nabla_i \ell \nabla_j \ell - \frac{\nabla^2_{ij} \ell}{\ell^2} h - \frac{\nabla_j h \nabla_i \ell}{\ell^2} - \frac{\nabla_i h \nabla_j \ell}{\ell^2} \\
&= \frac{b_{ij}}{\ell} - \frac{1}{\ell} \left( \nabla_i \frac{h}{\ell}  \nabla_j \ell + \nabla_j \frac{h}{\ell}  \nabla_i \ell \right) - \frac{h}{\ell^2} \delta_{ij} \\
&= \frac{b_{ij}}{\ell} - \frac{h}{\ell^2} \delta_{ij},
\end{align}
where we used $\nabla \left( \frac{h}{\ell} \right) = 0$ at $\xi_0$ and that $\nabla^2_{ij} \ell + \ell \delta_{ij} = \delta_{ij}$.

Since $\sigma_n$ is monotonically increasing in its argument, it follows that
\eq{\label{s5:sigma_n-monotone}
\sigma_n(b_{ij}) \geq \sigma_n \left( \frac{h}{\ell} \delta_{ij} \right) = \left( \frac{h}{\ell} \right)^n.
}
Put $\varphi(\tau) = \min_{\mathcal{C}_\theta} \left( \frac{h}{\ell} \right)(\cdot,\tau)$. Substituting \eqref{s5:sigma_n-monotone} into \eqref{s5:evol-u}, we find
\eq{
\frac{d\varphi}{d\tau}  &\geq \varphi \left( 1 - \varphi^{p-n-1} f \ell^{p-1} \right).
}
Either $\varphi(\tau) \geq \left( \max_{\mathcal{C}_\theta} (f \ell^{p-1}) \right)^{-\frac{1}{p-n-1}}$ or $\frac{d\varphi}{d\tau} > 0$ (whenever it is differentiable). Therefore, $\frac{h}{\ell}$ remains uniformly bounded below away from zero.
Similarly, we can show that $\frac{h}{\ell}$ is uniformly bounded above.
\end{proof}

Based on the above $C^0$ estimate, the $C^1$ and $C^2$ estimates of solutions to the flow \eqref{GX-p>n+1} follow directly from an argument almost identical to that in \autoref{C1}--\autoref{s4:lem-C2*}. We obtain the following result.

\begin{lemma}\label{s4:lem-C2*non-even}
Let $p > n+1$ and $\theta \in (0, \frac{\pi}{2})$. Suppose $\Sigma_{\tau}$ is a smooth, strictly convex solution to the flow \eqref{GX-p>n+1}. Then there exists a positive constant $C$, depending only on $n$, $p$, $\theta$, $\min_{\mathcal{C}_\theta} f$, $\|f\|_{C^2(\mathcal{C}_\theta)}$ and $\Sigma_0$, such that
\eq{
|\nabla h(\cdot,\tau)| \leq C, \quad \frac{1}{C} \leq \kappa_i(\cdot,\tau)\leq C.
}
\end{lemma}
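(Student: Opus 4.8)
The plan is to imitate, step by step, the chain of estimates established in \autoref{C1}--\autoref{s4:lem-C2*}, the point being that the flow \eqref{GX-p>n+1} differs from \eqref{GX} only by replacing the space‑independent normalization factor $\al = \frac{(n+1)V(\widehat{\Sigma_0})}{\int_{\cC_\theta} f h^p\, d\xi}$ by $1$. Since every estimate in \autoref{C1}--\autoref{s4:lem-C2*} uses $\al$ only through its positivity and its $\tau$‑uniform two‑sided bounds — which hold trivially for $\al\equiv 1$ — and since setting $\al\equiv 1$ in addition makes the integral terms $\frac{p\int_{\cC_\theta} f h^{p-1}h_\tau\,d\xi}{\int_{\cC_\theta} f h^p\,d\xi}$ vanish, the arguments carry over with only cosmetic changes. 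The two‑sided bound \eqref{s3:C0-estimate-bound-p>n+1} on $h$ is already available from \autoref{s3:C0-estimate-p>n+1}, and from $h\le C$ the gradient bound $|\nabla h(\cdot,\tau)|\le C$ follows immediately from \cite[Lem. 4.8]{HIS25}, exactly as in \autoref{C1}.

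Next I would prove two‑sided bounds on the Gauss curvature $\cK$. For the upper bound, apply the maximum principle to $Q = \dfrac{f h^p\cK - h}{h-\varepsilon_0} = \dfrac{-h_\tau}{h-\varepsilon_0}$ with $\varepsilon_0 = \tfrac12\min h$, as in \autoref{lemma-Gauss-curv}. Along $\partial\cC_\theta$ the Robin condition gives $h_{\tau\mu} = \cot\theta\, h_\tau$ and $h_\mu = \cot\theta\, h$, whence $\nabla_\mu Q = -\dfrac{\varepsilon_0\cot\theta\, Q}{h-\varepsilon_0} < 0$ at a large interior‑time maximum, so the maximum of $Q(\cdot,\tau)$ lies in $\cC_\theta\setminus\partial\cC_\theta$. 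There the spatial critical‑point inequalities together with $\cK_\tau/\cK\le nQ\,(1-\varepsilon_0\cK^{1/n})$ (from $\sum_i b^{ii}\ge n\cK^{1/n}$) and the evolution $\partial_\tau Q = \bigl(Q + \tfrac{h}{h-\varepsilon_0}\bigr)\tfrac{\cK_\tau}{\cK} + Q + Q^2$ — which now carries no integral terms — give $\partial_\tau Q < 0$ once $Q$ is large; hence $\cK\le C$. For the lower bound, apply the maximum principle to $Q = \log(f^{-1}\cK^{-1}) - N\log h$. Differentiating the boundary relation $h_{\tau\mu} = \cot\theta\, h_\tau$ one obtains, just as in \eqref{IJ} (the factor $\al$ cancels in that computation), $\dfrac{\cK_\mu}{\cK} = -\dfrac{f_\mu}{f} + (1-p)\cot\theta$, so $\nabla_\mu Q = (p-1-N)\cot\theta < 0$ provided $N > p-1$; the interior computation is then verbatim that of \autoref{lemma-Gauss-curv-lower} with $\al$ set to $1$ — in particular the coefficient $-(N-p+1)\al h^p f\cK$ of $\sum_i b^{ii}$ stays negative for $N > p-1$ — and taking $N$ large yields $\cK\ge c$.

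Finally, for the upper bound on the principal radii I would apply the maximum principle to $Q = \sigma_1 + \tfrac{A}{2}|\nabla h|^2$, $\sigma_1 = \sum_i b_{ii} = \sum_i\kappa_i^{-1}$, as in \autoref{s4:lem-C2*}. On $\partial\cC_\theta$, using the Gauss--Weingarten relations $h_{\mu\beta} = 0$ and $\nabla_\mu b_{\beta\beta} = \cot\theta\,(b_{\mu\mu}-b_{\beta\beta})$ for $\beta\le n-1$ together with the boundary identity above, $\nabla_\mu Q$ reduces to $\cot\theta\bigl(-\sigma_1 + c_1 b_{\mu\mu} - c_2 b_{\mu\mu}^{2+1/(n-1)}\bigr)$, which is negative when $Q$ is large by the same case split ($\sigma_1\ge c_1 b_{\mu\mu}$ versus $\sigma_1\le c_1 b_{\mu\mu}$) used in \autoref{s4:lem-C2*}. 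At an interior maximum one differentiates $\log(h-h_\tau) = -\log\det(\nabla^2 h + hI) + \log(f h^p)$ twice, applies the commutator identity on $\bbS^n$, uses \eqref{XxX2} to dominate $\sum_i b^{ii}\sum_j\nabla^2_{ii}b_{jj}$ and discards the nonnegative term $\sum_{i,j,k}b^{ii}b^{kk}(\nabla_j b_{ik})^2$, and invokes \autoref{s3:C0-estimate-p>n+1}, \autoref{C1}, the Gauss‑curvature bounds just obtained, and $h-h_\tau = f h^p\cK\ge c$, to arrive at $\dfrac{\partial_\tau Q}{h-h_\tau}\le c_0 + c_1 A + (c_2-A)\bigl(Q-\tfrac{A}{2}|\nabla h|^2\bigr) + \bigl(\tfrac{3A}{2}|\nabla h|^2 - Q\bigr)\sum_i b^{ii}$; choosing $A = 2c_2$ forces $\partial_\tau Q < 0$ whenever $Q$ is large. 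This bounds $\sigma_1$, hence each $\kappa_i$ from below, and combined with $\cK = \prod_i\kappa_i\le C$ it bounds each $\kappa_i$ from above as well, which is the assertion. I do not foresee a genuine obstacle: the work is entirely in checking that the maximum‑principle computations of \autoref{lemma-Gauss-curv}--\autoref{s4:lem-C2*} never exploited anything about $\al$ beyond boundedness and positivity, and that the boundary terms behave identically because $\theta\in(0,\tfrac\pi2)$ keeps $\cot\theta > 0$; the only spots deserving a second look are the two boundary sign conditions, in the lower Gauss‑curvature estimate and in the principal‑radius estimate, where one must confirm that the relevant coefficients keep their sign after substituting $\al\equiv 1$, and both are immediate.
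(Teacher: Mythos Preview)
Your proposal is correct and is exactly the approach the paper takes: the paper's own proof is a one-line remark that the $C^1$ and $C^2$ estimates for \eqref{GX-p>n+1} follow by repeating the arguments of \autoref{C1}--\autoref{s4:lem-C2*} verbatim, using the $C^0$ bound of \autoref{s3:C0-estimate-p>n+1} in place of \autoref{s3:C0-estimate}. One small slip: in your upper Gauss-curvature step the evolution of $Q$ still carries the term $\bigl(Q+\tfrac{h}{h-\varepsilon_0}\bigr)\,p\,\tfrac{h_\tau}{h}$ (coming from differentiating $h^p$), not only $\bigl(Q+\tfrac{h}{h-\varepsilon_0}\bigr)\tfrac{\cK_\tau}{\cK}$; this extra term is $O(Q^2)$ and is harmless for the argument, but your displayed formula should include it.
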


\section{Convergence}
\label{Sec5}
Standard parabolic theory with Neumann boundary conditions guarantees the short-time existence for the flows \eqref{eq:un-normalized_flow} and \eqref{GX-p>n+1}. Moreover, combining \cite[Theorems 6.1, 6.4, 6.5]{D88}, \cite[Theorem 14.23]{Lie96} with our uniform estimates for the normalized solution of the flow \eqref{GX}, we find that the solution to the unnormalized flow \eqref{eq:un-normalized_flow} satisfies $V(\widehat{\cM_t})\to 0$ as $t\to \cT$. Now, from the definition of the time parameter $\tau$ it follows that $\tau(t) \to \infty$ as $t \to \mathcal{T}$.

\begin{proof}[Proof of \autoref{MTO}]
By the uniform estimates established in \autoref{Sec4-1},  the solution $h(\cdot,\tau)$ to \eqref{Gh} enjoys uniform $C^k$-norm bound for each $k \in \bbN$. Hence, by the Arzel\`{a}--Ascoli theorem and \autoref{monJ2}, we can extract a subsequence of times, $\{ \tau_i \}_{i \geq 1} \subset (0,\infty)$, such that $h(\cdot,\tau_i)$ converges smoothly to an even strictly convex function $h$ such that $h$ satisfies \eqref{s1:static-eq}, where $\Sigma$ is the even, smooth, and strictly convex capillary hypersurface  whose capillary support function is $h$.
\end{proof}

\begin{proof}[Proof of \autoref{MTO-noneven}]
Due to \autoref{monJ-noneven} and the uniform estimates shown in \autoref{Sec4-2}, a  subsequence of the solution to \eqref{h-flow 2} converges to a smooth, strictly convex solution of \eqref{Lp-MP-smooth}.
\end{proof}

\section*{Acknowledgment}
J. Hu and Ivaki were supported by the Austrian Science Fund (FWF) under Project P36545.
Y. Hu was supported by the National Key Research and Development Program of China 2021YFA1001800.

\vspace{10mm}
\textsc{Institut f\"{u}r Diskrete Mathematik und Geometrie,\\ Technische Universit\"{a}t Wien,\\ Wiedner Hauptstra{\ss}e 8-10, 1040 Wien, Austria,\\} 
\email{\href{jinrong.hu@tuwien.ac.at}{jinrong.hu@tuwien.ac.at}}

	\vspace{5mm}
	\textsc{School of Mathematical Sciences, Beihang University,\\ Beijing 100191, China,\\}
	\email{\href{mailto:huyingxiang@buaa.edu.cn}{huyingxiang@buaa.edu.cn}}
	
		\vspace{5mm}
\textsc{Institut f\"{u}r Diskrete Mathematik und Geometrie,\\ Technische Universit\"{a}t Wien,\\ Wiedner Hauptstra{\ss}e 8-10, 1040 Wien, Austria,\\} \email{\href{mailto:mohammad.ivaki@tuwien.ac.at}{mohammad.ivaki@tuwien.ac.at}}


\begin{thebibliography}{20}

\bibitem[ACW01]{ACW01} J. Ai, K.-S. Chou, J. Wei, \textit{Self-similar solutions for the anisotropic affine curve shortening problem}, Calc. Var. Partial Differential Equations \textbf{13}(2001): 311--337.


\bibitem[Ale38]{A38} A. D. Aleksandrov, \textit{On the theory of mixed volumes. III. Extensions of two theorems of Minkowski on convex polyhedra to arbitrary convex bodies}, Mat. Sb. \textbf{3}(1938): 27--46.

\bibitem[Ale39]{A39} A. D. Aleksandrov, \textit{On the surface area measure of convex bodies}, Mat. Sb. \textbf{6}(1939): 167--174.

\bibitem[And99]{And99} B. Andrews, \textit{Gauss curvature flow: the fate of the rolling stones}, Invent. Math. \textbf{138}(1999): 151--161.

\bibitem[BBCY19]{B19} G. Bianchi, K. J. B\"or\"oczky, A. Colesanti, D. Yang, \textit{The $L_p$ Minkowski problem for $-n < p < 1$}, Adv. Math. \textbf{341}(2019): 493--535.

\bibitem[BG25]{BG25} K. J. B\"or\"oczky, P. Guan, \textit{Anisotropic flow, entropy, and the $L_p$ Minkowski problem}, Canad. J. Math. \textbf{77}(2025): 1--20.

\bibitem[BLYZ13]{BLYZ13} K. J. B\"or\"oczky, E. Lutwak, D. Yang, G. Zhang, \textit{The logarithmic Minkowski problem}, J. Amer. Math. Soc. \textbf{26}(2013): 831--852.

\bibitem[BCD17]{BCD17} S. Brendle, K. Choi, P. Daskalopoulos, \textit{Asymptotic behavior of flows by powers of the Gaussian curvature}, Acta Math. \textbf{219}(2017): 1--16.


\bibitem[BIS19]{BIS19} P. Bryan, M. N. Ivaki, J. Scheuer, \textit{A unified flow approach to smooth, even $L_p$ Minkowski problems}, Anal. PDE \textbf{12}(2019): 259--280.

\bibitem[BIS21a]{BIS21a} P. Bryan, M. N. Ivaki, J. Scheuer, \textit{Orlicz-Minkowski flows}, Calc. Var. Partial Differential Equations \textbf{60}(2021): 41.

\bibitem[BIS21b]{BIS21b} P. Bryan, M. N. Ivaki, J. Scheuer, \textit{Parabolic approaches to curvature equations}, Nonlinear Anal. \textbf{203}(2021): 112174.

\bibitem[CL21]{CL21} H. Chen, Q. Li, \textit{The $L_p$ dual Minkowski problem and related parabolic flows}, J. Funct. Anal. \textbf{281}(2021): 109139, 65 pp.

\bibitem[CY76]{CY76} S. Y. Cheng, S. T. Yau, \textit{On the regularity of the solution of the $n$-dimensional Minkowski problem}, Comm. Pure Appl. Math. \textbf{29}(1976): 495--516.

\bibitem[Cho85]{CH85} K.-S. Chou, \textit{Deforming a hypersurface by its Gauss-Kronecker curvature}, Comm. Pure Appl. Math. \textbf{38}(1985): 867--882.

\bibitem[CW00]{CW00} K.-S. Chou, X.-J. Wang, \textit{A logarithmic Gauss curvature flow and the Minkowski problem}, Ann. Inst. H. Poincar\'{e} C Anal. Non Lin\'{e}aire \textbf{17}(2000): 733--751.

\bibitem[CW06]{CW06} K.-S. Chou, X.-J. Wang, \textit{The $L_p$ Minkowski problem and the Minkowski problem in centroaffine geometry}, Adv. Math. \textbf{205}(2006): 33--83.

\bibitem[Don88]{D88} G. C. Dong, \textit{Initial and nonlinear oblique boundary value problems for fully nonlinear parabolic equations}, J. Partial Differential Equations \textbf{1}(1988): 12--42.

\bibitem[Fir74]{F74} W. J. Firey, \textit{Shapes of worn stones}, Mathematika \textbf{21}(1974): 1--11.

\bibitem[Gag84]{Gag84} M. Gage, \textit{Curve shortening makes convex curves circular}, Duke Math. J. \textbf{51}(1984): 477--484.

\bibitem[GN17]{GN17} P. Guan, L. Ni, \textit{Entropy and a convergence theorem for Gauss curvature flow in high dimension}, J. Eur. Math. Soc. \textbf{19}(2017): 3735--3761.

\bibitem[GLW22]{GLW22} Q. Guang, Q.-R. Li, X.-J. Wang, \textit{The $L_p$ Minkowski problem with super-critical exponents}, arXiv:2203.05099 (2022).

\bibitem[HI25a]{HI25a} Y. Hu, M. N. Ivaki, \textit{Stability of the cone-volume measure with near constant density}, Int. Math. Res. Not. IMRN (2025), 9 pages.

\bibitem[HI25b]{HI25} Y. Hu, M. N. Ivaki, \textit{Capillary curvature images}, arXiv:2505.12921 (2025).

\bibitem[HIS25]{HIS25} Y. Hu, M. N. Ivaki, J. Scheuer, \textit{Capillary Christoffel-Minkowski problem}, arXiv:2504.09320 (2025).

\bibitem[HWYZ24]{HWYZ24} Y. Hu, Y. Wei, B. Yang, T. Zhou, \textit{A complete family of Alexandrov-Fenchel inequalities for convex capillary hypersurfaces in the half-space}, Math. Ann. \textbf{390}(2024): 3039--3075.

\bibitem[HL13]{HL13} Y. Huang, Q. Lu, \textit{On the regularity of the $L_p$ Minkowski problem}, Adv. Appl. Math. \textbf{50}(2013): 268--280.


\bibitem[Iva20]{I20} M. N. Ivaki, \textit{Iterations of curvature images}, Mathematika \textbf{66}(2020): 640--648.

\bibitem[IM23]{IM23} M. N. Ivaki, E. Milman, \textit{Uniqueness of solutions to a class of isotropic curvature problems}, Adv. Math. \textbf{435}(2023): 109350.

\bibitem[KLS25]{KLS25} W. Klingenberg, B. Lambert, J. Scheuer, \textit{A capillary problem for spacelike mean curvature flow in a cone of Minkowski space}, J. Evol. Equ. \textbf{25}(2025): 15, 24 pp.

\bibitem[LSW20]{LSW20} Q.-R. Li, W. Sheng, X.-J. Wang, \textit{Flow by Gauss curvature to the Aleksandrov and dual Minkowski problems}, J. Eur. Math. Soc. \textbf{22}(2020): 893--923.

\bibitem[Lie96]{Lie96} G. M. Lieberman, \textit{Second order parabolic differential equations}, World Scientific, River Edge, NJ, 1996.

\bibitem[LTU86]{LTU86} P.-L. Lions, N. S. Trudinger, J. I. E. Urbas, \textit{The Neumann problem for equations
of Monge-Amp\`ere type}, Comm. Pure Appl. Math. \textbf{39}(1986): 539--563.

\bibitem[LW13]{LW13} J. Lu, X.-J. Wang, \textit{Rotationally symmetric solutions to the $L_p$ Minkowski problem}, J. Differential Equations \textbf{254}(2013): 983--1005.

\bibitem[Lut93]{L93} E. Lutwak, \textit{The Brunn-Minkowski-Firey theory. I. Mixed volumes and the Minkowski problem}, J. Differential Geom. \textbf{38}(1993): 131--150.

\bibitem[LO95]{LO95} E. Lutwak, V. Oliker, \textit{On the regularity of solutions to a generalization of the Minkowski problem}, J. Differential Geom. \textbf{41}(1995): 227--246.

\bibitem[LYZ04]{LYZ04} E. Lutwak, D. Yang, G. Zhang, \textit{On the $L_p$ Minkowski problem}, Trans. Amer. Math. Soc. \textbf{356}(2004): 4359--4370.

\bibitem[MQ19]{MQ19} X. Ma, G. Qiu, \textit{The Neumann Problem for Hessian Equations}, Commun. Math. Phys. \textbf{366}(2019): 1--28.

\bibitem[MWW25a]{MWW25} X. Mei, G. Wang, L. Weng, \textit{The capillary Minkowski problem}, Adv. Math. \textbf{469}(2025): 110230, 29 pp.

\bibitem[MWW25b]{MWW25a} X. Mei, G. Wang, L. Weng, \textit{The capillary $L_p$ Minkowski problem}, arXiv:2505.07746 (2025).

\bibitem[MWW25c]{MWW25b} X. Mei, G. Wang, L. Weng, \textit{The capillary Gauss curvature flow}, arXiv:2506.09840 (2025).

\bibitem[MWWX25]{MWWX25} X. Mei, G. Wang, L. Weng, C. Xia, \textit{Alexandrov-Fenchel inequalities for convex hypersurfaces in the half-space with capillary boundary II}, Math. Z. \textbf{310}(2025): 71, 17 pp.

\bibitem[Min97]{M897} H. Minkowski, \textit{Allgemeine Lehrs\"atze \"{u}ber die convexen Polyeder}, Nachr. Ges. Wiss. G\"ottingen (1897): 198--219.

\bibitem[Min03]{M903} H. Minkowski, \textit{Volumen und Oberfl\"ache}, Math. Ann. \textbf{57}(1903): 447--495.

\bibitem[Nir53]{N53} L. Nirenberg, \textit{The Weyl and Minkowski problems in differential geometry in the large}, Comm. Pure Appl. Math. \textbf{6}(1953): 337--394.

\bibitem[Pog78]{P78} A. Pogorelov, \textit{The Minkowski multidimensional problem}, translated by V. Oliker, Scripta Series in Mathematics, Winston, Washington, DC, 1978.

\bibitem[Sar22]{Sar22} C. Saroglou, \textit{On a non-homogeneous version of a problem of Firey}, Math. Ann. \textbf{382}(2022): 1059--1090.


\bibitem[Sta02]{Sta02} A. Stancu, \textit{The discrete planar $L_0$-Minkowski problem}, Adv. Math. \textbf{167}(2002): 160--174.

\bibitem[WWX24]{WWX24} G. Wang, L. Weng, C. Xia, \textit{Alexandrov-Fenchel inequalities for convex hypersurfaces in the half-space with capillary boundary}, Math. Ann. \textbf{388}(2024): 2121--2154.

\bibitem[Zhu15a]{Zh15a} G. Zhu, \textit{The $L_p$ Minkowski problem for polytopes for $0 < p < 1$}, J. Funct. Anal. \textbf{269}(2015): 1070--1094.

\bibitem[Zhu15b]{Zhu15b} G. Zhu, \textit{The centro-affine Minkowski problem for polytopes}, J. Differential Geom. \textbf{101}(2015): 159--174.

\end{thebibliography}
\end{document}